\newtheorem{lemma}{Lemma}[section]
\newtheorem{corollary}[lemma]{Corollary}
\newtheorem{proposition}[lemma]{Proposition}
\newtheorem{theorem}[lemma]{Theorem}
\newtheorem{question}[lemma]{Question}
\Crefname{question}{Question}{Questions}
\newtheorem*{corollary*}{Corollary}
\theoremstyle{remark}
\newtheorem{remark}[lemma]{Remark}
\newcommand{\defterm}[1]{\textbf{#1}}
\newcommand{\bs}{\backslash}
\newcommand{\ov}{\overline}
\newcommand{\ad}{\mathrm{ad}}
\newcommand{\can}{\mathrm{can}}
\newcommand{\der}{\mathrm{der}}
\DeclareMathOperator{\Aut}{Aut}
\DeclareMathOperator{\Gal}{Gal}
\DeclareMathOperator{\Hom}{Hom}
\DeclareMathOperator{\Inn}{Inn}
\DeclareMathOperator{\Lie}{Lie}
\DeclareMathOperator{\MT}{MT}
\DeclareMathOperator{\Res}{Res}
\DeclareMathOperator{\Sh}{Sh}
\DeclareMathOperator{\Stab}{Stab}
\DeclareMathOperator{\spl}{sp}
\newcommand{\bA}{\mathbb{A}}
\newcommand{\bC}{\mathbb{C}}
\newcommand{\bG}{\mathbb{G}}
\newcommand{\bQ}{\mathbb{Q}}
\newcommand{\bR}{\mathbb{R}}
\newcommand{\bS}{\mathbb{S}}
\newcommand{\Qbar}{\overline \bQ}
\newcommand{\cS}{\mathcal{S}}
\newcommand{\cT}{\mathcal{T}}
\newcommand{\Hg}{\mathcal{H}_g}
\newcommand{\rH}{\mathrm{H}}
\newcommand{\gG}{\mathbf{G}}
\newcommand{\gGSp}{\mathbf{GSp}}
\newcommand{\gH}{\mathbf{H}}
\newcommand{\gM}{\mathbf{M}}
\newcommand{\gT}{\mathbf{T}}
\newcommand{\gZ}{\mathbf{Z}}
\newcommand{\thG}{{^{\tau,h}}\gG}
\newcommand{\thM}{{^{\tau,h}}\gM}
\newcommand{\thT}{{^{\tau,h}}\gT}
\newcommand{\thZ}{{^{\tau,h}}\gZ}
\newcommand{\thX}{{^{\tau,h}}X}
\newcommand{\thpG}{{^{\tau,h'}}\gG}
\newcommand{\thpX}{{^{\tau,h'}}X}
\newcommand{\thH}{{^{\tau,h}}\gH}
\newcommand{\thXH}{{^{\tau,h}}X_\gH}
\newcommand{\refConjC}{\cref{conjC}}
\title[Galois conjugates of special points and special subvarieties]{Galois conjugates of special points and special subvarieties in Shimura varieties}
\author{Martin Orr}
\subjclass[2010]{11G18, 14G35}
\keywords{Shimura varieties, canonical models, special points}
\begin{document}

\begin{abstract}
Let \( S \) be a Shimura variety with reflex field \( E \).
We prove that the action of \( \Gal(\Qbar/E) \) on \( S \) maps special points to special points and special subvarieties to special subvarieties.
Furthermore, the Galois conjugates of a special point all have the same complexity (as defined in the theory of unlikely intersections).
These results follow from Milne and Shih's construction of canonical models of Shimura varieties, based on a conjecture of Langlands which was proved by Borovoi and Milne.
\end{abstract}

\maketitle

\section{Introduction}

\subsection*{Special points}

Let \( S = \Sh_K(\gG, X) \)  be a Shimura variety.
Then \( S \) has a canonical model over the reflex field \( E_\gG = E(\gG, X) \).
According to the definition of a canonical model, for every special point \( s \in S \) with reflex field \( E(s) \), the Galois group \( \Gal(\Qbar/E(s)) \) acts on the Hecke orbit of \( s \) via a reciprocity morphism \cite[2.2.5]{Del79}.
In particular, if \( \tau \in \Gal(\Qbar/E(s)) \), then \( \tau(s) \) is also a special point.
In general \( E(s) \) is a non-trivial extension of \( E_\gG \), so this raises the following question.

\begin{question} \label{q:conj-sp-pt}
If \( \tau \in \Gal(\Qbar/E_\gG) \) does not fix \( E(s) \), is \( \tau(s) \) still a special point?
\end{question}

Langlands \cite{Lan79} stated a conjecture on the conjugation of Shimura varieties which implies the existence of canonical models (the construction of a canonical model, assuming the conjecture of Langlands, was completed in \cite{MS82b}).
Subsequently Borovoi \cite{Bor84} and Milne \cite{Mil83} proved the conjecture of Langlands.
This construction of canonical models gives a positive answer to \cref{q:conj-sp-pt}.

\subsection*{Special subvarieties}

We may ask a similar question about special subvarieties of dimension greater than zero.
By definition, a \defterm{special subvariety} of \( S \) is a geometrically irreducible component of the translate by a Hecke correspondence \( T_g \) of the image of a Shimura morphism \( [f] \colon \Sh_{K_\gH}(\gH, X_\gH) \to \Sh_K(\gG, X) \).
The Shimura variety \( \Sh_{K_\gH}(\gH, X_\gH) \) has a canonical model over \( E_\gH = E(\gH, X_\gH) \).
By \cite[Cor.~5.4]{Del71}, the morphism \( [f] \) is defined over the compositum \( E_\gG E_\gH \).
Consequently if \( Z \) is a component of the image of \( T_g \circ [f] \), then for every \( \tau \in \Gal(\Qbar/E_\gG E_\gH) \), \( \tau(Z) \) is again a geometrically irreducible component of the image of \( T_g \circ [f] \) and so \( \tau(Z) \) is a special subvariety of \( S \).
This leads to the following question.

\begin{question} \label{q:conj-sp-subvar}
If \( \tau \in \Gal(\Qbar/E_\gG) \) does not fix \( E_\gG E_\gH \), is \( \tau(Z) \) still a special subvariety?
\end{question}

The conjecture of Langlands tells us that \( \tau \Sh_{K_\gH}(\gH, X_\gH) \) is isomorphic to another Shimura variety, but it does not immediately tell us that the morphism \( \tau[f] \colon \tau \Sh_{K_\gH}(\gH, X_\gH) \to \Sh_K(\gG, X) \) is a Shimura morphism.
A positive answer to \cref{q:conj-sp-subvar} can be obtained from the proof of \cite[Lemma~9.5]{MS82b}.

Thus the answers to the above questions are implicit in \cite{MS82b} but they are not explicitly stated there.
The first goal of this paper is to explain enough of the machinery of \cite{MS82b} to answer \cref{q:conj-sp-pt,q:conj-sp-subvar}.

\subsection*{Complexity}

The second goal of the paper is to prove that all Galois conjugates of a special point have the same complexity.
The complexity of a special point is a quantity used in studying  questions of unlikely intersections such as the André--Oort conjecture.
The complexity of special points in general Shimura varieties was first used by Ullmo and Yafaev \cite{UY14}.
For a precise definition, we use a generalisation of \cite[Definition~10.1]{DR}.
We have generalised the definition slightly: \cite{DR} considered only a single geometrically irreducible component of a Shimura variety, and therefore could always choose \( g=1 \) (in the notation of the definition below).
We need to explicitly account for~\( g \) so that the complexity is well-defined for special points in all components of the Shimura variety.

Let \( s \) be a special point in \( \Sh_K(\gG, X) \).
The complexity of~\( s \) is defined as follows.
\begin{enumerate}
\item Write \( s = [h,g]_K \) for some \( h \in X \) and \( g \in \gG(\bA_f) \).
\item Let \( \gM \) be the Mumford-Tate group of \( h \).
This is a \( \bQ \)-torus in \( \gG \).
\item Let \( K_\gM^m \) be the maximal compact subgroup of \( \gM(\bA_f) \).
There is a unique maximal compact subgroup because \( \gM \) is a torus.
\item Let \( K_\gM = gKg^{-1} \cap \gM(\bA_f) \).
This is a compact subgroup, so contained in \( K_\gM^m \).
\item Let \( D_\gM \) be the absolute value of the discriminant of the splitting field of~\( \gM \).
\item Let \( \Delta(s) = \max\{D_\gM, \, [K_\gM^m : K_\gM]\} \).
\end{enumerate}
The \defterm{complexity} of \( s \) is \( \Delta(s) \).

If we make a different choice of \( (h',g') \in X \times \gG(\bA_f) \) lifting \( s \), then \( h' = \gamma h \) for some \( \gamma \in \gG(\bQ) \) such that \( g'^{-1} \gamma g \in K \).
Writing \( \gM' = \MT(h') \) and \( K_{\gM'} = g' K g'^{-1} \cap \gM'(\bA_f) \), we get \( \gM' = \gamma \gM \gamma^{-1} \) and \( K_{\gM'} = \gamma K_{\gM} \gamma^{-1} \).
Hence \( \Delta(s) \) is independent of the choice of \( (h,g) \).

Our main result on complexity is the following.

\begin{theorem} \label{galois-complexity}
Let \( S = \Sh_K(\gG, X) \) be the canonical model of a Shimura variety over the reflex field \( E_\gG = E(\gG, X) \).
Let \( s \in S \) be a special point.
Then for every \( \tau \in \Gal(\Qbar/E_\gG) \),  we have \( \Delta(\tau(s)) = \Delta(s) \).
\end{theorem}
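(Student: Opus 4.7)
My plan is to combine the explicit description of $\tau(s)$ given by the Milne-Shih construction (recalled earlier in the paper) with the observation that, for the commutative subgroup $\gM$, the twisting involved in that construction is as trivial as possible.

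Writing $s = [h, g]_K$ with $\gM = \MT(h)$, the Milne-Shih construction applied to $(\gG, X, h)$ supplies the conjugate datum $(\thG, \thX)$, a Hodge parameter ${}^{\tau,h}h \in \thX$ with Mumford-Tate group $\thM \subset \thG$, and an element ${}^{\tau,h}g \in \thG(\bA_f)$, such that under the canonical identification $\tau \Sh_K(\gG, X) \cong \Sh_{{}^{\tau,h}K}(\thG, \thX)$ the point $\tau(s)$ is represented by $({}^{\tau,h}h, {}^{\tau,h}g)$.

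The decisive step is to exhibit an isomorphism $i \colon \gM \xrightarrow{\sim} \thM$ of $\bQ$-tori which both comes out of the Milne-Shih twisting and identifies $gKg^{-1} \cap \gM(\bA_f)$ with $({}^{\tau,h}g)({}^{\tau,h}K)({}^{\tau,h}g)^{-1} \cap \thM(\bA_f)$. Such an $i$ should exist because the twisting cocycle takes values in $\gG^{\ad}$ and is constructed from data (the Hodge cocharacter $\mu_h$ and the Taniyama element) that factor through the commutative torus $\gM$; since inner automorphisms act trivially on a commutative group, the restriction of the cocycle to $\gM$ is canonically trivialised, producing a $\bQ$-isomorphism $i$. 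Compatibility with level subgroups should follow because the same splitting cocycle that relates ${}^{\tau,h}g$ to $g$ also implements $i$ on $\bA_f$-points. Once $i$ is in hand the theorem is immediate: the splitting field is an isomorphism invariant of a $\bQ$-torus, so $D_{\thM} = D_\gM$; $i$ carries the unique maximal compact $K_\gM^m$ onto $K_{\thM}^m$; and by construction it carries $K_\gM$ onto $K_{\thM}$, so the two indices agree, giving $\Delta(\tau(s)) = \Delta(s)$.

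I expect the main obstacle to be the construction of $i$ in the previous paragraph: one has to unpack the Milne-Shih cocycle carefully and verify that it admits a canonical coboundary on $\gM$ that simultaneously matches the adelic datum $(g, K)$. Once that verification is done the conclusion, as above, is essentially formal.
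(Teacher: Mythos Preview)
Your strategy—exploit that the $\cS$-action used to form $\thG$ is trivial on the torus $\gT\supset\gM$, so that $\gM=\thM$ canonically as $\bQ$-tori and $\theta_{\tau,h}$ restricts to the identity on $\gM(\bA_f)$—is correct and is the heart of the matter; your isomorphism $i$ exists and is in fact the identity (this is \cref{thT}). What is missing is the passage from the twisted model back to $\Sh_K(\gG,X)$, which is where $\Delta(\tau(s))$ is actually defined.

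Concretely, $\phi_{\tau,h}$ identifies $\tau\Sh(\gG,X)$ with $\Sh(\thG,\thX)$, but $\tau(s)$ is a point of $\Sh_K(\gG,X)$ via the canonical model, and its complexity is computed from a lift $(h_\tau,g_\tau)\in X\times\gG(\bA_f)$. To relate $(h_\tau,g_\tau,K)$ to $({}^\tau h,\theta_{\tau,h}(g),{}^{\tau,h}K)$ one must go through the descent isomorphism $\phi(\tau;h)=[f_1]\circ T_{\beta_1}$: one obtains $f_{1*}(h_\tau)=\gamma\cdot{}^\tau h$ and $f_1(g_\tau\beta_1)=\gamma\,\theta_{\tau,h}(g)\,z$ for some $\gamma\in\thG(\bQ)$ and $z\in\overline{\thZ(\bQ)}$. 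The level data are then compared not via $\theta_{\tau,h}$ but via $\theta'_{\tau,h}(g'):=f_1(\beta_1^{-1}g'\beta_1)$, and one must check both that $\theta_{\tau,h}$ and $\theta'_{\tau,h}$ differ only by a homomorphism into $\overline{\thZ(\bQ)}$ (the paper's \cref{zeta}) and that $\theta'_{\tau,h}$ itself restricts to an isomorphism $\gM(\bA_f)\to\thM(\bA_f)$, which requires unpacking the explicit construction of $f_1,\beta_1$ in terms of $\tilde\beta(\tau,h)\in\gT(\bA_f\otimes L)$. Your sentence ``the same splitting cocycle that relates ${}^{\tau,h}g$ to $g$ also implements $i$ on $\bA_f$-points'' is precisely where this is glossed over: the adelic map coming from the descent datum is $\theta'_{\tau,h}$, not $\theta_{\tau,h}$, and reconciling the two on and off $\gM$ is the content of the paper's \cref{galois-special-mt}.
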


The proof of \cref{galois-complexity} uses details of Milne and Shih's construction of descent data for Shimura varieties.
As with the answers to \cref{q:conj-sp-pt,q:conj-sp-subvar}, the theorem has a simpler proof when restricted to \( \tau \) fixing \( E(s) \): see \cite[p.~156]{Daw15}.

\subsection*{Motivation: unlikely intersections}

The motivation for this paper came from work on unlikely intersections in collaboration with Christopher Daw.
One aim of the paper is to give full proofs of certain claims in \cite{DR} which are well-known to experts but for which either no proof appears in the literature,
or the proof can be found only within the proof of a larger result and the claim used in \cite{DR} is never explicitly stated.
In particular, the last paragraph of \cite[p.~1869]{DR} claims that the answer to \cref{q:conj-sp-subvar} is positive.
The same paragraph also claims that
\[ \Delta(\sigma(Z)) = \Delta(Z) \]
where \( Z \) is a special subvariety of a Shimura variety component \( S \), \( \sigma \) is an element of \( \Gal(\Qbar/F) \) (where \( F \) is a field of definition for \( S \)), and \( \Delta \) is defined as
\[ \Delta(Z) = \max \{ \deg(Z), \min\{ \Delta(P) : P \in Z \text{ is a special point} \} \}. \]
The equality \( \Delta(\sigma(Z)) = \Delta(Z) \) is an immediate corollary of \cref{galois-complexity}.

The results of this paper will also be used in a forthcoming paper of Daw and the author on unlikely intersections with Hecke--facteur families.

\subsection*{Outline of paper}

Sections~\ref{sec:svs}--\ref{sec:langlands} recall various known facts and definitions, in order to make our terminology and notation clear and to gather together in one place all the facts we will use.
Section~\ref{sec:svs} consists of definitions of Shimura varieties and associated objects.
Most of this is standard, except our use of the term ``Shimura pro-variety'' for the inverse limit of the system of Shimura varieties associated with a given Shimura dataum.
Section~\ref{sec:serre-taniyama} outlines key facts about the Serre and Taniyama groups from \cite{Lan79} and \cite{MS82a}.
Section~\ref{sec:langlands} states the conjecture of Langlands on conjugation of Shimura varieties, which is central to all the results of the paper.
It also explains the construction of the twisted group \( \thG \) which appears in this conjecture, based on \cite{Lan79} and \cite{MS82b}.

In section~\ref{sec:conjugation} we prove that \cref{q:conj-sp-pt,q:conj-sp-subvar} have positive answers.
This is a simple application of the conjecture and construction in section~\ref{sec:langlands}.
Finally in section~\ref{sec:conj-complexity} we prove \cref{galois-complexity} on the complexity of Galois conjugates of special points.
This depends on further details of the construction from \cite{MS82b} as well as a lemma on morphisms of Shimura pro-varieties, which we prove.

\subsection*{Acknowledgements}

The questions discussed in this paper arose during collaboration with Christopher Daw.  I am grateful to him for extensive discussion of these questions and for carefully reading drafts of this paper.  I am also grateful to Andrei Yafaev for helpful discussions.
I would like to thank the anonymous referee for careful reading of the paper and suggesting some improvements to the clarity of explanations.

\section{Preliminaries: Shimura varieties} \label{sec:svs}

We recall various definitions associated with Shimura varieties, in order to fix terminology and notation.

A \defterm{Shimura datum} is a pair \( (\gG, X) \) where \( \gG \) is a connected reductive \( \bQ \)-algebraic group and \( X \) is a \( \gG(\bR) \)-conjugacy class in \( \Hom(\bS, \gG_\bR) \) such that each \( h \in X \) satisfies the following axioms \cite[2.1.1.1--2.1.1.3]{Del79}.
(Here \( \bS \) denotes the Deligne torus \( \Res_{\bC/\bR} \bG_m \).)
\begin{enumerate}
\item The Hodge structure on \( \Lie(\gG_\bR) \) induced by \( h \) (via the adjoint representation of \( \gG \)) has type \( \{ (-1,1), (0,0), (1,-1) \} \).
\item The involution \( \operatorname{Int} h(i) \) is a Cartan involution of the adjoint group \( \gG^\ad_\bR \).
\item \( \gG^\ad \) has no \( \bQ \)-simple factor on which the projection of \( h \) is trivial.
\end{enumerate}
These axioms imply that \( X \) is a finite disjoint union of Hermitian symmetric domains \cite[Cor.~1.1.17]{Del79}.

Given a Shimura datum \( (\gG, X) \) and a compact open subgroup \( K \subset \gG(\bA_f) \), we can form a quasi-projective complex variety \( \Sh_K(\gG, X) \) whose \( \bC \)-points are
\[ \Sh_K(\gG, X)(\bC) = \gG(\bQ) \mathop\bs X \times \gG(\bA_f) \mathop/ K. \]
Here \( \gG(\bQ) \) acts diagonally on \( X \times \gG(\bA_f) \) on the left, while \( K \) acts only on \( \gG(\bA_f) \) on the right.
We call \( \Sh_K(\gG, X) \) a \defterm{Shimura variety}.
We write \( [h,g]_K \) for the complex point of \( \Sh_K(\gG, X) \) which is represented by \( (h,g) \in X \times \gG(\bA_f) \).

\medskip

If \( K' \subset K \), then there is a finite morphism \( \Sh_{K'}(\gG, X) \to \Sh_K(\gG, X) \).
Thus the Shimura varieties \( \Sh_K(\gG, X) \) form a projective system as \( K \) varies over compact open subgroups of \( \gG(\bA_f) \).
The inverse limit of this system is a scheme over \( \bC \), not of finite type, which we denote \( \Sh(\gG, X) \).
Its \( \bC \)-points are given by
\begin{equation} \label{eqn:spv-c-pts}
\Sh(\gG, X)(\bC) = \gG(\bQ) \mathop\bs X \times \gG(\bA_f) \mathop/ \ov{\gZ(\bQ)}.
\end{equation}
Here \( \gZ \) denotes the centre of \( \gG \) and \( \ov{\gZ(\bQ)} \) is the closure of \( \gZ(\bQ) \) in the adelic topology on \( \gZ(\bA_f) \).
Again \( \gG(\bQ) \) acts diagonally on \( X \times \gG(\bA_f) \) on the left, while \( \ov{\gZ(\bQ)} \) acts only on \( \gG(\bA_f) \) on the right.
Note that \( \gZ(\bQ) \) acts trivially on both \( X \) and \( \gG(\bA_f) / \ov{\gZ(\bQ)} \), so \eqref{eqn:spv-c-pts} is equivalent to the description in \cite[Prop.~2.1.10]{Del79}:
\[ \Sh(\gG, X)(\bC) = \raisebox{-0.3em}{\( \bigl( \gG(\bQ) \mathop/ \gZ(\bQ) \bigr) \)} \bigm\bs \raisebox{0.3em}{\( X \times \bigl( \gG(\bA_f) \mathop/ \ov{\gZ(\bQ)} \bigr) \)}. \]
We call \( \Sh(\gG, X) \) a \defterm{Shimura pro-variety}.
We write \( [h,g] \) for the complex point of \( \Sh(\gG, X) \) which is represented by \( (h,g) \in X \times \gG(\bA_f) \).

\medskip

If \( g \in \gG(\bA_f) \), write \( T_g \colon \Sh(\gG, X) \to \Sh(\gG, X) \) for the morphism of pro-varieties
\[ T_g [h,q] = [h,qg]. \]
This gives a right action of \( \gG(\bA_f) \) on \( \Sh(\gG, X) \).
The morphisms \( T_g \) are known as \defterm{Hecke operators}.
The orbit of any point in \( \Sh(\gG, X) \) under the action of \( \gG(\bA_f) \) is called a \defterm{Hecke orbit}.

Let \( K \) be a compact open subgroup of \( \gG(\bA_f) \) and \( g \in \gG(\bA_f) \).
The Hecke operator \( T_g \colon \Sh(\gG, X) \to \Sh(\gG, X) \) induces a morphism of varieties
\[ \cdot g \colon \Sh_{K \cap gKg^{-1}}(\gG, X) \to \Sh_{g^{-1}Kg \cap K}(\gG, X). \]
Hence the following diagram defines a correspondence on \( \Sh_K(\gG, X) \):
\[ \xymatrix{
    \Sh_{K \cap gKg^{-1}}(\gG, X)  \ar[d]  \ar[r]^{\cdot g}
  & \Sh_{g^{-1}Kg \cap K}(\gG, X)  \ar[d]
\\  \Sh_K(\gG, X)
  & \Sh_K(\gG, X)
} \]
We call this correspondence a \defterm{Hecke correspondence} and also denote it by \( T_g \).

A \defterm{morphism of Shimura data} \( (\gG_1, X_1) \to (\gG_2, X_2) \) is a homomorphism of \( \bQ \)-algebraic groups \( f \colon \gG_1 \to \gG_2 \) such that composition with \( f \) maps \( X_1 \) into \( X_2 \).
A morphism of Shimura data induces a morphism of pro-varieties
\[ [f] \colon \Sh(\gG_1, X_1) \to (\gG_2, X_2). \]
If \( K_1 \subset \gG_1(\bA_f) \) and \( K_2 \subset \gG_2(\bA_f) \) are compact open subgroups such that \( f(K_1) \subset K_2 \), then \( f \) also induces a morphism
\[ [f] \colon \Sh_{K_1}(\gG_1, X_1) \to \Sh_{K_2}(\gG_2, X_2). \]
We call either of these induced morphisms \( [f] \) a \defterm{Shimura morphism}.

A \defterm{Shimura subdatum} of \( (\gG, X) \) is a Shimura datum \( (\gH, X_\gH) \) where \( \gH \subset \gG \) and \( X_\gH \subset X \), with the inclusions \( \gH \hookrightarrow \gG \) and \( X_\gH \hookrightarrow X \) being compatible.
The inclusion of Shimura data induces a morphism of Shimura pro-varieties \( \Sh(\gH, X_\gH) \to \Sh(\gG, X) \), which is a closed immersion by \cite[Prop.~1.15]{Del71}.
We call the image of \( \Sh(\gH, X_\gH) \to \Sh(\gG, X) \) a \defterm{Shimura sub-pro-variety} of \( \Sh(\gG, X) \).

\medskip

We will next recall Deligne's definition of a canonical model of a Shimura pro-variety.
Before this we need to recall certain other definitions.

For any point \( h \in X \), the \defterm{Mumford--Tate group} of \( h \) is the smallest \( \bQ \)-algebraic subgroup \( \MT(h) \subset \gG \) such that \( h \) factors through \( \MT(h)_\bR \).
The \defterm{generic Mumford--Tate group} of \( (\gG, X) \) is the smallest \( \bQ \)-algebraic subgroup \( \MT(X) \subset \gG \) such that \emph{every} \( h \in X \) factors through \( \MT(X)_\bR \).
A point \( h \in X \) is said to be \defterm{Hodge generic} if \( \MT(h) = \MT(X) \).
It is well-known that every Shimura datum contains Hodge generic points; see for example the proof of \cite[Prop.~7.5]{Del72}.

A \defterm{pre-special point} is a point \( h \in X \) for which \( \MT(h) \) is commutative.
Mumford--Tate groups are always reductive, so this implies that \( \MT(h) \) is a torus.
A \defterm{special point} is a point \( [h,g] \in \Sh(\gG, X) \) or \( [h,g]_K \in \Sh_K(\gG, X) \) such that \( h \) is pre-special (note that this is independent of the choice of lift \( (h,g) \)).

\medskip

Let \( \mu \colon \bG_{m,\bC} \to \bS_\bC \) denote the cocharacter given by \( \mu(z) = (z,1) \) (identifying \( \bS(\bC) \) with \( \bC^\times \times \bC^\times \)).
If \( h \in X \), then the \( \gG(\bC) \)-conjugacy class of \( h \circ \mu \colon \bS_\bC \to \gG_\bC \) is defined over a number field.
We call the field of definition of this conjugacy class (inside \( \bC \)) the \defterm{reflex field} of the Shimura datum \( (\gG, X) \) and write it \( E(\gG, X) \).

If \( h \in X \) is a pre-special point with Mumford--Tate group \( \gM \subset \gG \), then the pair \( (\gM, \{h\}) \) is a Shimura datum.
We define \( E(h) \), the \defterm{reflex field} of \( h \), to be the reflex field of \( (\gM, \{h\}) \).
Note that \( E(\gG, X) \subset E(h) \).
The complex points of \( \Sh(\gM, \{h\}) \) form a pro-finite set.
Deligne \cite[2.2.4]{Del79} defined an action of \( \Gal(\Qbar/E(h)) \) on \( \Sh(\gM, \{h\})(\bC) \)
by means of a reciprocity morphism \( \Gal(\Qbar/E(h)) \to \gM(\bA_f) / \ov{\gM(\bQ)} \).
The pro-variety \( \Sh(\gM, \{h\}) \) has a unique model over \( E(h) \) for which the Galois action is the same as the reciprocity action.

A \defterm{canonical model} of a Shimura pro-variety \( \Sh(\gG, X) \) is a scheme \( M \) over \( E(\gG, X) \) equipped with a right action of \( \gG(\bA_f) \) and a \( \gG(\bA_f) \)-equivariant isomorphism \( M \times_{E(\gG, X)} \bC \to \Sh(\gG, X) \) such that
\begin{enumerate}[(a)]
\item the special points in \( M \) are defined over \( \Qbar \);
\item for each pre-special point \( h \in X \), the morphism \( \Sh(\MT(h), \{h\}) \to \Sh(\gG, X) \) induced by the inclusion \( \MT(h) \hookrightarrow \gG \) is defined over \( E(h) \).
\end{enumerate}
According to \cite[Cor.~5.5]{Del71}, a Shimura pro-variety has at most one canonical model (up to unique isomorphism).
According to \cite[Cor.~5.4]{Del71}, if \( \Sh(\gG_1, X_1) \) and \( \Sh(\gG_2, X_2) \) have canonical models, then any Shimura morphism \( \Sh(\gG_1, X_1) \to \Sh(\gG_2, X_2) \) is defined over the compositum of the reflex fields \( E(\gG_1, X_1).E(\gG_2, X_2) \).

Deligne (\cite{Del71} and \cite{Del79}) established the existence of canonical models for a large class of Shimura pro-varieties, namely those of ``abelian type'', starting from the fact that the moduli space of principally polarised abelian varieties of dimension~\( g \) is defined over \( \bQ \) and this gives a canonical model for \( \Sh(\gGSp_{2g}, \Hg^\pm) \).
Milne and Shih \cite{MS82b} proved that a conjecture of Langlands implies the existence of canonical models for all Shimura pro-varieties.
Borovoi \cite{Bor84} and Milne \cite{Mil83} then proved the conjecture of Langlands using a result of Kazhdan \cite{Kaz83}, completing the proof of the existence of canonical models.

\section{The Serre and Taniyama groups} \label{sec:serre-taniyama}

We recall some facts about the Serre and Taniyama groups which are required in order to set up the conjecture of Langlands.

The Serre group \( \cS \) is a pro-algebraic torus over \( \bQ \) (i.e.\ an inverse limit of a projective system of \( \bQ \)-tori) which can be thought of as the ``universal Mumford--Tate group of a \( \bQ \)-rational polarisable Hodge structure of CM type.''
More formally, \( \cS \) is the Tannakian group of the category of \( \bQ \)-rational polarisable Hodge structures of CM type (with the obvious forgetful fibre functor to \( \bQ \)-vector spaces).

An explicit construction of \( \cS \) is described in \cite[\S 1]{MS82a}, as well as the construction of a canonical Hodge parameter \( h_{\can} \colon \bS \to \cS_\bR \).
We shall need the following universal property of \( (\cS, h_{\can}) \).

\begin{lemma} \label{serre-universal}
For every \( \bQ \)-algebraic torus \( \gT \) and every \( h \colon \bS \to \gT_\bR \), if the weight homomorphism \( h \circ w \colon \bG_{m,\bR} \to \gT_\bR \) is defined over \( \bQ \), then there exists a unique homomorphism of pro-\( \bQ \)-algebraic tori \( \rho \colon \cS \to \gT \) such that \( h = \rho \circ h_{\can} \).
\end{lemma}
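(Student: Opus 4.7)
The plan is to reformulate everything in terms of cocharacter modules and apply the explicit construction of \( \cS \) given in \cite[\S 1]{MS82a}. A morphism \( \rho\colon\cS\to\gT \) of pro-\(\bQ\)-tori is determined by the induced \( \Gal(\Qbar/\bQ) \)-equivariant map on cocharacter modules, and a Hodge parameter \( h\colon\bS\to\gT_\bR \) is determined by its associated cocharacter \( \mu_h = h_\bC\circ\mu\colon\bG_{m,\bC}\to\gT_\bC \). The identity \( h = \rho\circ h_{\can} \) therefore translates to the single condition \( \rho_*(\mu_{\can}) = \mu_h \), where \( \mu_{\can} \) is the cocharacter associated to \( h_{\can} \).

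The first step is to translate the weight hypothesis. Since \( h \) is defined over \( \bR \), the complex conjugate \( \bar\mu_h \) equals \( h_\bC\circ\bar\mu \) (with \( \bar\mu(z)=(1,z) \)); writing the cocharacter module additively, we have
\[ \mu_h + \bar\mu_h = h_\bC\circ w_\bC \in X_*(\gT_\bC). \]
The hypothesis that \( h\circ w \) is defined over \( \bQ \) is therefore exactly that \( \mu_h + \bar\mu_h \) lies in \( X_*(\gT) \) and is \( \Gal(\Qbar/\bQ) \)-fixed. A short argument with the stabiliser of \( \mu_h \) in \( \Gal(\Qbar/\bQ) \) then shows that the field of definition of \( \mu_h \) is stable under complex conjugation, hence is totally real or CM; after enlarging, we may assume that \( \mu_h \) is defined over a finite CM Galois extension \( L/\bQ \) and work at this finite level.

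Next I would appeal to the construction in \cite[\S 1]{MS82a} of the finite-level Serre torus \( \cS^L \) together with its canonical cocharacter \( \mu_{\can} \). The content of that construction is that \( (\cS^L, \mu_{\can}) \) is universal among pairs \( (\gT, \mu) \) consisting of a \( \bQ \)-torus \( \gT \) and a cocharacter \( \mu\colon\bG_{m,\bC}\to\gT_\bC \) defined over \( L \) whose weight \( \mu + c\mu \) lies in \( X_*(\gT)^{\Gal(\Qbar/\bQ)} \). Applying this universal property to \( (\gT,\mu_h) \) produces a unique \( \rho^L\colon\cS^L\to\gT \) with \( \rho^L_*(\mu_{\can})=\mu_h \); composing with the projection \( \cS\to\cS^L \) from the inverse limit gives the desired \( \rho \).

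Uniqueness of \( \rho \) is automatic, because the \( \Gal(\Qbar/\bQ) \)-orbit of \( \mu_{\can} \) generates \( X_*(\cS) \) by construction. The main obstacle is the substantive verification of the universal property of \( \cS^L \) just stated: one has to unwind the explicit presentation of \( X^*(\cS^L) \) in \cite{MS82a} and check that exactly the weight relation (and no further relation) is imposed. This is, however, essentially the defining feature of the Serre group, so once the construction of \cite{MS82a} is in hand the remainder of the argument is formal.
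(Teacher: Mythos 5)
The paper does not actually prove this lemma: it simply states it as the universal property of \( (\cS, h_{\can}) \), citing the explicit construction in \cite[\S 1]{MS82a} and remarking that the hypothesis on \( h \circ w \) is equivalent to the Serre condition \cite[(1.1)]{MS82a}. Your proposal supplies a genuine proof sketch, and its overall plan is the standard one: translate Hodge parameters into cocharacters, translate morphisms of pro-tori into Galois-equivariant maps of cocharacter lattices, and invoke the universal property of the finite-level Serre torus \( \cS^L \). In the end you too defer the substantive verification to \cite{MS82a}, but you correctly identify what has to be checked there, and your reformulation \( h = \rho\circ h_{\can} \Leftrightarrow \rho_*(\mu_{\can})=\mu_h \) is right because \( \mu_h \) determines \( h \) (the second cocharacter being its \( \iota \)-conjugate).

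There is, however, a gap in one intermediate step. You claim the weight hypothesis shows ``the field of definition of \( \mu_h \) is stable under complex conjugation, hence is totally real or CM,'' and that after enlarging one may take \( L \) to be a finite Galois CM field. The implication ``stable under complex conjugation \( \Rightarrow \) totally real or CM'' is false: \( \bQ(2^{1/3}) \) is fixed by complex conjugation (it lies in \( \bR \)) but is neither totally real nor CM, and its Galois closure \( \bQ(2^{1/3},\omega) \) (with \( \omega \) a primitive cube root of unity) is not CM either, so enlarging does not repair the argument. The fact you actually need --- that a cocharacter satisfying the Serre condition is defined over a CM or totally real field --- is true, but it uses the full Serre condition \( (\sigma-1)(\iota+1)\mu_h = 0 = (\iota+1)(\sigma-1)\mu_h \) (where \( \iota \) is complex conjugation), not merely stability of \( \Stab(\mu_h) \) under conjugation by \( \iota \). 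It is the second identity, saying that \( \sigma\mu_h - \mu_h \) lies in the \( (-1) \)-eigenspace of \( \iota \) for every \( \sigma \), that forces the smallest \( \bQ \)-torus through which \( \mu_h \) factors to be a CM torus. You should spell this out, or at least cite it, rather than rest on the conjugation-stability observation; otherwise the passage to a finite CM level (and hence the appeal to the universal property of \( \cS^L \)) is unjustified.
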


Here \( w \colon \bG_{m,\bR} \to \bS \) denotes the morphism given on \( \bR \)-points by the inclusion \( \bR^\times \to \bC^\times \).
The condition that \( h \circ w \) is defined over \( \bQ \) is equivalent to \cite[(1.1)]{MS82a}.
The universal property determines \( (\cS, h_{\can}) \) up to unique isomorphism.

\medskip

The Taniyama group \( \cT \) is an extension of \( \Gal(\Qbar/\bQ) \) by \( \cS \) which was defined by Langlands \cite[\S 5]{Lan79}.
According to \cite{Del82}, it is isomorphic to the Tannakian group of the category of absolute Hodge CM motives over \( \bQ \).
The Taniyama group comes with an exact sequence
\[ 1 \longrightarrow \cS \longrightarrow \cT \overset{\pi}{\longrightarrow} \Gal(\Qbar/\bQ) \longrightarrow 1. \]
This is an exact sequence of pro-\( \bQ \)-algebraic groups if we make \( \Gal(\Qbar/\bQ) \) into a pro-\( \bQ \)-group by regarding it as a limit of finite groups \( \Gal(L/\bQ) \) and declaring that every point of these finite sets is a \( \bQ \)-point.
The Taniyama group is also equipped with a splitting of the exact sequence over \( \bA_f \):
\[ \spl \colon \Gal(\Qbar/\bQ) \to \cT(\bA_f). \]

For any \( \tau \in \Gal(\Qbar/\bQ) \), \( \pi^{-1}(\tau) \subset \cT \) is a pro-\( \bQ \)-variety.
Letting \( \cS \) act on \( \pi^{-1}(\tau) \) by multiplication on the right, we get a right \( \cS \)-torsor which we denote \( {^\tau \cS} \) (in choosing the right action, we are following \cite[Remark~2.9]{MS82a}).
Thanks to \( \spl \), this \( \cS \)-torsor is split over \( \bA_f \).

\section{The conjecture of Langlands} \label{sec:langlands}

Let \( (\gG, X) \) be a Shimura datum.
In order to give a conjectural description of the Galois conjugates of \( \Sh(\gG, X) \), Langlands \cite[\S 6]{Lan79} constructed the following objects for each pre-special point \( h \in X \) and each \( \tau \in \Gal(\Qbar/\bQ) \):
\begin{enumerate}[(i)]
\item a Shimura datum \( (\thG, \thX) \);
\item a pre-special point \( {^\tau h} \in \thX \);
\item a continuous group isomorphism \( \theta_{\tau,h} \colon \gG(\bA_f) \to \thG(\bA_f) \).
\end{enumerate}
The construction depends on the chosen pre-special point \( h \in X \).
In order to explain how the resulting Shimura pro-varieties are related when we vary \( h \),
Langlands also constructed an isomorphism of pro-\( \bC \)-varieties
\[ \phi(\tau;h',h) \colon \Sh(\thG, \thX) \to \Sh(\thpG, \thpX) \]
for each pair of pre-special points \( h, h' \in X \).

\begin{remark}
Our notation is based on \cite{MS82b}, which differs slightly from the notation in \cite{Lan79} in the positioning of superscripts.
We always explicitly include the dependence on \( h \) in our notation, while both \cite{Lan79} and \cite{MS82b} frequently omit it.
We label various objects with the Hodge parameter \( h \colon \bS \to \gG_\bR \), while \cite{Lan79} and \cite{MS82b} use the cocharacter \( \mu \colon \bG_{m,\bC} \to \gG_\bC \); since \( h \in X \) and \( \mu \) determine each other, this does not matter.
The isomorphism of adelic groups \( \theta_{\tau,h} \) is not given a name in \cite{Lan79} or \cite{MS82b}, being denoted simply by \( g \mapsto g^\tau \) or \( g \mapsto {^\tau g} \) respectively, but we have found it convenient to name it explicitly.
\end{remark}

Before outlining the construction of these objects, we shall first state Conjecture~C of Langlands and discuss its consequences for canonical models.
The original statement of this conjecture was at \cite[pp.~232--233]{Lan79}.
Alternative formulations can be found at \cite[p.~311]{MS82b} and \cite[\S 2]{Bor82}.
The conjecture was proved by Borovoi \cite{Bor84} (completed in \cite{Bor87}) and Milne \cite[Thm.~7.1]{Mil83}.

\begin{theorem} \label{conjC}
Let \( (\gG, X) \) be a Shimura datum and let \( \tau \in \Aut(\bC) \).
\begin{enumerate}[(a)]
\item For every pre-special point \( h \in X \), there is an isomorphism of pro-\( \bC \)-varieties
\[ \phi_{\tau,h} \colon \tau \Sh(\gG, X) \to \Sh(\thG, \thX) \]
such that
\begin{enumerate}[(i)]
\item \( \phi_{\tau,h}(\tau[h,1]) = [{^\tau h}, 1] \); and
\item the diagram
\vspace{-6pt}
\[ \renewcommand{\labelstyle}{\textstyle}
\xymatrix@C+3pc@R+1pc{
    \tau \Sh(\gG, X)  \ar[r]^{\tau T_g}  \ar[d]^{\phi_{\tau,h}}
  & \tau \Sh(\gG, X)                     \ar[d]^{\phi_{\tau,h}}
\\  \Sh(\thG, \thX)   \ar[r]^{T_{\theta_{\tau,h}(g)}}
  & \Sh(\thG, \thX)
} \]
commutes for every \( g \in \gG(\bA_f) \).
In other words,
\[ \phi_{\tau,h}(\tau(T_g(s))) = T_{\theta_{\tau,h}(g)}(\phi_{\tau,h}(\tau(s))) \]
for all \( s \in \Sh(\gG, X) \) and \( g \in \gG(\bA_f) \).
\end{enumerate}

\medskip

\item For every pair of pre-special points \( h, h' \in X \), the following diagram commutes:
\[
\renewcommand{\labelstyle}{\textstyle}
\xymatrix@C+2pc@R+1pc{
    \tau \Sh(\gG, X)  \ar[r]^-{\phi_{\tau,h}}  \ar[rd]_{\phi_{\tau,h'}}
  & \Sh(\thG, \thX)   \ar[d]^{\phi(\tau;h',h)}
\\& \Sh(\thpG, \thpX).
} \]
\end{enumerate}
\end{theorem}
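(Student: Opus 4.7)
The plan is to follow the Borovoi--Milne strategy, which reduces Langlands's conjecture to Kazhdan's theorem on Galois conjugates of arithmetic locally symmetric spaces. First I would perform preliminary reductions: using the morphism $\Sh(\gG, X) \to \Sh(\gG^{\ad}, X^{\ad})$ and tracking the central contribution via the Serre group machinery of \S\ref{sec:serre-taniyama}, reduce to the case where $(\gG, X)$ is adjoint; a product decomposition then reduces further to the $\bQ$-simple adjoint case. At this stage it is convenient to distinguish Shimura data of abelian type from the remaining ones.

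In the abelian-type case, Deligne's construction of canonical models via embedding into a Siegel datum $(\gGSp_{2g}, \Hg^\pm)$ is available unconditionally, so one has an explicit $E(\gG, X)$-model on which $\Aut(\bC/E(\gG,X))$ acts. Applying the main theorem of complex multiplication to the CM abelian varieties parametrised by pre-special points, one computes the $\tau$-action on these points and matches it with Langlands's recipe: given $h$ pre-special with Mumford--Tate torus $\gM$, the universal property of the Serre group (Lemma~\ref{serre-universal}) supplies a morphism $\rho_h \colon \cS \to \gM$, and the twisted datum $(\thG, \thX, {^\tau h})$ arises from pushing the $\cS$-torsor ${^\tau\cS}$ forward along $\rho_h$ and using the result to inner-twist $\gG$ through the inclusion $\gM \hookrightarrow \gG$. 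The adelic isomorphism $\theta_{\tau,h}$ is manufactured from the splitting $\spl$ of the Taniyama extension, so its Hecke-equivariance (ii) is built into the construction.

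For the general $\bQ$-simple adjoint case, which includes the non-abelian exceptional types, the starting point is Kazhdan's theorem: applying $\tau$ to a neat connected component $\Gamma \bs X^+$ of $\Sh_K(\gG, X)$ yields another arithmetic quotient of a Hermitian symmetric domain. Margulis super-rigidity lifts the conjugate lattice to an arithmetic subgroup of a $\bQ$-algebraic group $\gG'$, necessarily an inner form of $\gG$, and the conjugated complex structure equips $\gG'_\bR$ with a conjugacy class $X'$ of homomorphisms $\bS \to \gG'_\bR$ satisfying the Shimura axioms. One then identifies $(\gG', X')$ with the Langlands construction $(\thG, \thX)$ by matching reflex fields, by testing on the Hecke action, and by using the density of CM points to reduce the identification of the $\tau$-action on special points to the previously-treated abelian-type case.

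The main obstacle, and the deepest input, is matching the inner-twisting cocycle in $\rH^1(\bQ, \gG^{\ad})$ produced by Kazhdan's theorem with the cocycle obtained by pushing ${^\tau\cS}$ forward along $\ad \circ \rho_h$. This requires a careful comparison between the motivic Galois action encoded in the Taniyama group $\cT$ and the Galois action on arithmetic locally symmetric varieties supplied by Kazhdan, and is precisely the content of the Langlands--Milne--Shih conjugation formula, which Milne verifies by constructing descent data from $\cT$-torsors. Once cocycle matching is established, condition (a)(i) is built into the construction and (a)(ii) follows from Hecke-equivariance of $\spl$; the compatibility (b) is a formal consequence of the naturality of the push-forward in the choice of pre-special point, with $\phi(\tau; h', h)$ induced by the canonical identification of torsors pushed forward along $\rho_h$ and $\rho_{h'}$.
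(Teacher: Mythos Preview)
The paper does not prove this theorem. \Cref{conjC} is Langlands's Conjecture~C, and the paper explicitly states it as a result established elsewhere: ``The conjecture was proved by Borovoi \cite{Bor84} (completed in \cite{Bor87}) and Milne \cite[Thm.~7.1]{Mil83}.'' The surrounding section~\ref{sec:langlands} only recalls the construction of \( (\thG,\thX) \), \( {^\tau h} \), and \( \theta_{\tau,h} \) from \cite{Lan79} and \cite{MS82b}; it then \emph{uses} \cref{conjC} as a black box for the applications in sections~\ref{sec:conjugation} and~\ref{sec:conj-complexity}. So there is no in-paper proof to compare your proposal against.

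Your outline is a reasonable high-level sketch of the Borovoi--Milne strategy and is broadly faithful to the cited literature: reduction to the \( \bQ \)-simple adjoint case, the abelian-type case via Deligne's canonical models and CM theory, and the remaining cases via Kazhdan's theorem, with the crux being the identification of the inner twist produced by Kazhdan with the one coming from the Taniyama torsor \( {^\tau\cS} \). Two small caveats: not every abelian-type datum literally embeds in \( (\gGSp_{2g},\Hg^\pm) \) (that is Hodge type; abelian type is handled by passing through a Hodge-type cover of the adjoint datum), and the role you assign to Margulis super-rigidity is really absorbed into Kazhdan's theorem itself rather than invoked separately. But these are refinements of an outline that is, in spirit, the published argument---just not one the present paper attempts to reproduce.
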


Note that the isomorphism \( \phi_{\tau,h} \) in \refConjC(a) is unique because (a)(i) and~(ii) specify what it does on the Hecke orbit of \( [h,1] \), which is dense in \( \Sh(\gG, X) \).

If \( \tau \in \Aut(\bC) \) fixes \( E := E(\gG, X) \), then Milne and Shih \cite[Remark~4.13 and Prop.~7.8]{MS82b} construct an isomorphism of pro-\( \bC \)-varieties
\[ \phi(\tau;h) \colon \Sh(\gG, X) \to \Sh(\thG, \thX) \]
such that
\begin{enumerate}[(i)]
\item \( \phi(\tau;h) \circ T_g = T_{\theta_{\tau,h}(g)} \circ \phi(\tau;h) \) \label{phi-tau-h}
for all \( g \in \gG(\bA_f) \);
\item \( \phi(\tau;h',h) = \phi(\tau;h') \circ \phi(\tau;h)^{-1} \)
for all pre-special points \( h, h' \in X \).
\end{enumerate}
The isomorphism \( \phi(\tau;h) \) has the form \( [f_1] \circ T_{\beta_1} \) for some isomorphism of Shimura data \( f_1 \colon (\gG, X) \to (\thG, \thX) \) and some element \( \beta_1 \in \gG(\bA_f) \).
(Note that \( \beta_1 \) and \( f_1 \) depend on choices made during the construction, but \( \phi(\tau;h) \) does not.)

Following \cite[p.~233]{Lan79}, Milne and Shih show that the morphisms
\begin{equation} \label{eqn:psit}
\psi_\tau = \phi(\tau;h)^{-1} \circ \phi_{\tau,h} \colon \tau \Sh(\gG, X) \to \Sh(\gG, X)
\end{equation}
form a descent datum (for any pre-special point~\( h \in X \)).
For a proof that this descent datum is effective, see \cite{Mil99}.
Hence there exists a model \( M(\gG, X) \) for \( \Sh(\gG, X) \) over \( E(\gG, X) \) which splits this descent datum.
In other words, \( M(\gG, X) \) is a pro-variety over \( E(\gG, X) \) with an isomorphism \( \iota \colon M(\gG, X) \times_E \bC \to \Sh(\gG, X) \) such that the following diagram commutes for every \( \tau \in \Aut(\bC/E) \):

\begin{equation} \label{eqn:descent}
\vcenter{\xymatrix{
  &  M(\gG, X) \times_E \bC  \ar[ld]_{\tau \iota}  \ar[rd]^{\iota}
\\   \tau \Sh(\gG, X)        \ar[rr]^{\psi_\tau}   \ar[rd]_{\phi_{\tau,h}}
  && \Sh(\gG, X)                                   \ar[ld]^{\phi(\tau;h)}
\\&  \Sh(\thG, \thX)
}}
\end{equation}
Because \( \psi_\tau \circ \tau T_g = T_g \circ \psi_\tau \) for all \( g \in \gG(\bA_f) \) and \( \tau \in \Aut(\bC/E) \), the right action of \( \gG(\bA_f) \) on \( \Sh(\gG, X) \) also descends to \( M(\gG, X) \).
The model \( M(\gG, X) \) is canonical by \cite[Prop.~7.14]{MS82b}.

\medskip

Now we shall outline the construction of the Shimura datum \( (\thG, \thX) \), following the description at \cite[p.~310]{MS82b}.

Let \( \gT \) be a maximal \( \bQ \)-torus in \( \gG \) such that \( h \) factors through \( \gT_\bR \).
Let \( \gT^\ad \) denote the image of \( \gT \) in \( \gG^\ad \) and let \( h^\ad \colon \bS \to \gT^\ad_\bR \) be the composition of \( \gT \to \gT^\ad \) with~\( h \).
Now \( \gT^\ad(\bR) \) is compact by axiom \cite[2.1.1.2]{Del79}, so the weight homomorphism \( h^\ad \circ w \colon \bG_{m,\bR} \to \gT^\ad_\bR \) is trivial and \textit{a fortiori} defined over \( \bQ \).
Hence we can apply the universal property of the Serre group (\cref{serre-universal}) to get a homomorphism of pro-\( \bQ \)-algebraic groups \( \rho_h \colon \cS \to \gT^\ad \) such that \( h^\ad = \rho_h \circ h_{\can} \).

Composing \( \rho_h \) with the inclusion \( \gT^\ad \to \gG^\ad \), we get a homomorphism \( \cS \to \gG^\ad \).
Now \( \gG^\ad \) acts on \( \gG \) by inner automorphisms, so we get a left action of \( \cS \) on \( \gG \) defined over \( \bQ \).
This action is independent of the choice of maximal torus \( \gT \) because it factors through \( \MT(h^\ad) \).

We define \( \thG \) to be the twist of \( \gG \) (with the left action of \( \cS \) via \( \rho_h \) which we just described) by the right \( \cS \)-torsor \( {^\tau \cS} = \pi^{-1}(\tau) \) which we defined in section~\ref{sec:serre-taniyama}:
\[ \thG = {^\tau \cS} \times^\cS \gG. \]
As remarked in \cite{MS82b}, using \cite[Remarks~2.9, 3.18]{MS82a}, there is an isomorphism \( \thG_L \cong \gG_L \) where \( L \) is a splitting field for \( \gT \).

\begin{remark} \label{thT}
Because \( \cS \) acts on \( \gG \) by inner automorphisms and the action factors through \( \gT^\ad \), the action is trivial on \( \gT \).
Hence \( {^\tau \cS} \times^\cS \gT = \gT \).
Thus \( \gT = \thT \) is naturally a subtorus of \( \thG \) and \( h \colon \bS \to \gT_\bR \) induces \( {^\tau h} \colon \bS \to \thG_\bR \).
\end{remark}

Let \( \thX \) be the \( \thG(\bR) \)-conjugacy class of \( {^\tau h} \).
Langlands showed that \( (\thG, \thX) \) is a Shimura datum \cite[p.~231]{Lan79}.
By construction, \( {^\tau h} \) factors through the \( \bQ \)-torus \( \thT \), so it is a pre-special point.

Recall that the Taniyama group comes with an adelic splitting \( \spl \colon \Gal(\Qbar/\bQ) \to \cT(\bA_f) \).
Since \( \spl(\tau) \in {^\tau \cS}(\bA_f) \), we can define a continuous group isomorphism \( \theta_{\tau,h} \colon \gG(\bA_f) \to \thG(\bA_f) = {^\tau \cS}(\bA_f) \times^\cS \gG(\bA_f) \) by
\[ \theta_{\tau,h}(g) = \spl(\tau).g. \]
Since \( \cS \) acts trivially on \( \gT \), we have \( \spl(\tau).g = g \) for all \( g \in \gT(\bA_f) \) and thus \( \theta_{\tau,h} \) restricts to the identity on \( \gT(\bA_f) = \thT(\bA_f) \).

\begin{lemma} \label{th-subdatum}
Let \( (\gH, X_\gH) \) be a Shimura subdatum of \( (\gG, X) \).
Let \( h \in X_\gH \subset X \) be a pre-special point.
Then \( (\thH, \thXH) \) is a Shimura subdatum of \( (\thG, \thX) \) and \( \theta_{\tau,h,\gH} \colon \gH(\bA_f) \to \thH(\bA_f) \) is the restriction of \( \theta_{\tau,h,\gG} \colon \gG(\bA_f) \to \thG(\bA_f) \).
\end{lemma}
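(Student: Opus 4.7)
The plan is to follow the construction of \( \thG \) from section~\ref{sec:langlands} but applied simultaneously to \( \gH \) and \( \gG \), and verify that making compatible choices at each step yields an inclusion \( \thH \subset \thG \).

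First I would choose a maximal \( \bQ \)-torus \( \gT_\gH \) of \( \gH \) through which \( h \) factors, and then extend \( \gT_\gH \) to a maximal \( \bQ \)-torus \( \gT \) of \( \gG \) (possible since any \( \bQ \)-torus in \( \gG \) lies in some maximal \( \bQ \)-torus). Using these tori, the construction produces homomorphisms \( \rho_h^\gG \colon \cS \to \gT^\ad \subset \gG^\ad \) and \( \rho_h^\gH \colon \cS \to \gT_\gH^\ad \subset \gH^\ad \). The crucial step is to show that the action of \( \cS \) on \( \gG \) via \( \rho_h^\gG \) preserves \( \gH \) and, upon restriction, agrees with the action of \( \cS \) on \( \gH \) via \( \rho_h^\gH \). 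Let \( \gT_\gH' \) denote the image of \( \gT_\gH \) under \( \gH \hookrightarrow \gG \twoheadrightarrow \gG^\ad \). Because \( h \) factors through \( \gT_{\gH,\bR} \), the adjoint parameter \( h^\ad \colon \bS \to \gT^\ad_\bR \) in fact factors through \( \gT'_{\gH,\bR} \); applying the universal property (\cref{serre-universal}) with target \( \gT_\gH' \) produces some \( \rho \colon \cS \to \gT_\gH' \), and uniqueness with target \( \gT^\ad \) forces \( \rho_h^\gG \) to factor through \( \gT_\gH' \subset \gT^\ad \). Hence the inner-automorphism action of \( \cS \) on \( \gG \) via \( \rho_h^\gG \) is realised by conjugation by elements of \( \gT_\gH \subset \gH \), which preserves \( \gH \). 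The same lifts show the restricted action coincides with conjugation by elements of \( \gT_\gH \) factored through \( \gT_\gH^\ad \subset \gH^\ad \), which by uniqueness in \cref{serre-universal} is the action via \( \rho_h^\gH \).

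Given this compatibility of \( \cS \)-actions, twisting by the \( \cS \)-torsor \( {^\tau \cS} = \pi^{-1}(\tau) \) is functorial, so \( \thH = {^\tau\cS} \times^\cS \gH \) embeds as a \( \bQ \)-algebraic subgroup of \( \thG = {^\tau\cS} \times^\cS \gG \). By \cref{thT} applied to both \( \gT_\gH \subset \gH \) and \( \gT \subset \gG \), the identifications \( \gT_\gH = \thT_\gH \) and \( \gT = \thT \) are compatible with the inclusion \( \thH \subset \thG \), so the pre-special point \( {^\tau h} \) constructed inside \( \thH \) agrees with the one constructed inside \( \thG \). Then \( \thXH \), the \( \thH(\bR) \)-conjugacy class of \( {^\tau h} \), is a subset of \( \thX \), the \( \thG(\bR) \)-conjugacy class of \( {^\tau h} \), and the inclusion \( \thH \hookrightarrow \thG \) sends \( \thXH \) into \( \thX \); hence \( (\thH, \thXH) \) is a Shimura subdatum of \( (\thG, \thX) \).

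For the final assertion about \( \theta_{\tau,h} \), both \( \theta_{\tau,h,\gH} \) and \( \theta_{\tau,h,\gG} \) are defined by the single formula \( g \mapsto \spl(\tau).g \) using the image of \( \spl(\tau) \) in \( {^\tau\cS}(\bA_f) \), and the twist of \( \gH(\bA_f) \) by \( {^\tau\cS}(\bA_f) \) sits inside the twist of \( \gG(\bA_f) \) by \( {^\tau\cS}(\bA_f) \) via the inclusion constructed above; the equality on \( \gH(\bA_f) \) is then immediate.

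The main obstacle is the verification in the first paragraph that the \( \cS \)-action on \( \gG \) restricts to \( \gH \) and agrees there with the \( \cS \)-action coming from \( \gH \) itself, since this requires double application of the universal property of \( (\cS, h_\can) \) with two different target tori and a careful match of adjoint quotients (\( \gT_\gH^\ad \) in \( \gH^\ad \) versus the image of \( \gT_\gH \) in \( \gG^\ad \)); the rest of the argument is purely formal functoriality of twisting.
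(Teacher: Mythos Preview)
Your proposal is correct and follows essentially the same approach as the paper: both arguments reduce to showing that the $\cS$-action on $\gG$ (via $\rho_h^\gG$) restricts to the $\cS$-action on $\gH$ (via $\rho_h^\gH$), and both do this by repeated use of the uniqueness clause in \cref{serre-universal}.  The only cosmetic difference is the choice of intermediate torus through which both $\rho$'s are shown to factor: the paper uses $\MT(h)/(Z(\gG)\cap\MT(h))$ and organises the comparison into a single commutative diagram, while you use the image $\gT_\gH'$ of $\gT_\gH$ in $\gG^\ad$ and argue in two steps (first that $\rho_h^\gG$ lands in $\gT_\gH'$, then that the composite $\cS\to\gT_\gH'\twoheadrightarrow\gT_\gH^\ad$ equals $\rho_h^\gH$).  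Both choices work; the paper's choice has the minor advantage of being visibly independent of the maximal tori, while yours stays closer to the objects already introduced in the construction.  One small point you leave implicit is that the weight hypothesis of \cref{serre-universal} holds for the target $\gT_\gH'$; this is immediate since $\gT_\gH'(\bR)\subset\gT^\ad(\bR)$ is compact, but it is worth a word.
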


\begin{proof}
Choose maximal \( \bQ \)-tori \( \gT_\gG \subset \gG \) and \( \gT_\gH \subset \gH \) such that \( \gT_\gH \subset \gT_\gG \) and \( h \) factors through \( \gT_{\gH,\bR} \).
Letting \( \gT_\gG^\ad = \gT/Z(\gG) \subset \gG^\ad \) and \( \gT_\gH^\ad = \gT/Z(\gH) \subset \gH^\ad \), we have the following commutative diagram of \( \bQ \)-tori:
\[ \xymatrix@R-2pt{
    \MT(h)                        \ar@{->>}[r] \ar@{^{(}->}[d]
  & \MT(h) / Z(\gG) \cap \MT(h)   \ar@{->>}[r] \ar@{^{(}->}[d]
  & \MT(h) / Z(\gH) \cap \MT(h)                \ar@{^{(}->}[d]
\\  \gT_\gH                       \ar@{->>}[r] \ar@{^{(}->}[d]
  & \gT_\gH / Z(\gG) \cap \gH     \ar@{->>}[r] \ar@{^{(}->}[d]
  & \gT_\gH^\ad
\\  \gT_\gG                       \ar@{->>}[r]
  & \gT_\gG^\ad
} \]
The image of \( h \) in \( \MT(h) / Z(\gG) \cap \MT(h) \) has trivial weight, so by \cref{serre-universal} it factors as \( \rho_{h,\MT} \circ h_{\can} \) for some \( \rho_{h,\MT} \colon \cS \to \MT(h) / Z(\gG) \cap \MT(h) \).
The homomorphisms \( \rho_{h,\gG} \colon \cS \to \gT_\gG^\ad \) and \( \rho_{h,\gH} \colon \cS \to \gT_\gH^\ad \) used to construct \( \thG \) and \( \thH \) respectively both factor through \( \rho_{h,\MT} \).
Hence the action of \( \cS \) on \( \gH \) coming from \( h \) is the restriction of the action of \( \cS \) on \( \gG \) coming from \( h \).
Consequently
\[ \thH = {^\tau\cS} \times^\cS \gH \subset {^\tau\cS} \times^\cS \gG = \thG. \]
Furthermore \( {^\tau h} \) is the same whether we construct it using \( \gG \) or \( \gH \).
Consequently \( (\thH, \thXH) \) is a Shimura subdatum of \( (\thG, \thX) \).

To see that \( \theta_{\tau,h,\gH} \) is the restriction of \( \theta_{\tau,h,\gG} \) to \( \gH \), simply note that both maps have the form \( g \mapsto \spl(\tau).g \).
\end{proof}

\section{Conjugation of special points and special subvarieties} \label{sec:conjugation}

In this section, we use \refConjC{} and the construction in \cref{sec:langlands} to obtain positive answers to \cref{q:conj-sp-pt,q:conj-sp-subvar}.

\begin{theorem}
Let \( S \) be the canonical model of a Shimura variety \( Sh_K(\gG, X) \).
Let \( s \in S(\Qbar) \) be a special point.
Then for every \( \tau \in \Gal(\Qbar/E(\gG, X)) \), the Galois conjugate \( \tau(s) \) is a special point of \( S \).
\end{theorem}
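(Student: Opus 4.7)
The plan is to reduce everything to the Shimura pro-variety \( \Sh(\gG, X) \), use \refConjC{} to transport \( \tau(s) \) to a point of a conjugate Shimura pro-variety, verify it is special there, and then transport it back using the fact that the descent isomorphisms \( \phi(\tau;h) \) are built from Shimura morphisms and Hecke operators, both of which preserve special points.

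First I would pass from \( S = \Sh_K(\gG, X) \) to the pro-variety \( \Sh(\gG, X) \): since the canonical model \( M(\gG, X) \) of the pro-variety descends the right \( K \)-action and since special points project to special points in either direction, it suffices to show that if \( \tilde s \in \Sh(\gG, X)(\Qbar) \) is special and \( \tau \in \Gal(\Qbar / E(\gG, X)) \), then \( \tau(\tilde s) \) is special in \( M(\gG, X) \). Choose a lift \( \tilde s = [h, g] \) with \( h \in X \) pre-special and \( g \in \gG(\bA_f) \); then \( \tilde s = T_g([h,1]) \).

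Next I would compute \( \phi_{\tau,h}(\tau \tilde s) \). By \refConjC(a)(ii) applied to \( \tau T_g \) and then by \refConjC(a)(i),
\[ \phi_{\tau,h}(\tau \tilde s) = \phi_{\tau,h}(\tau T_g(\tau[h,1])) = T_{\theta_{\tau,h}(g)}(\phi_{\tau,h}(\tau[h,1])) = [{^\tau h},\, \theta_{\tau,h}(g)]. \]
By the construction recalled in \cref{sec:langlands}, \( {^\tau h} \in \thX \) is pre-special (it factors through the \( \bQ \)-torus \( \thT \)), so \( \phi_{\tau,h}(\tau \tilde s) \) is a special point of \( \Sh(\thG, \thX) \).

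Finally, the Galois action of \( \tau \) on \( M(\gG, X)(\bC) \simeq \Sh(\gG, X)(\bC) \) is, by the descent datum \eqref{eqn:psit}, given by \( \psi_\tau = \phi(\tau;h)^{-1} \circ \phi_{\tau,h} \). Hence \( \tau(\tilde s) = \phi(\tau;h)^{-1}([{^\tau h}, \theta_{\tau,h}(g)]) \). The map \( \phi(\tau;h) \) has the form \( [f_1] \circ T_{\beta_1} \) for a Shimura isomorphism \( f_1 \colon (\gG, X) \to (\thG, \thX) \) and some \( \beta_1 \in \gG(\bA_f) \); its inverse has the same form (using \( f_1^{-1} \) and a suitable Hecke translate). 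A Shimura isomorphism carries pre-special Hodge parameters to pre-special Hodge parameters (it sends Mumford--Tate tori to Mumford--Tate tori), and Hecke operators do not touch the first coordinate, so both preserve specialness. Therefore \( \tau(\tilde s) \) is a special point of \( \Sh(\gG, X) \), and consequently \( \tau(s) \) is special in \( S \). The only mildly delicate step is recalling the explicit form of \( \phi(\tau;h) \) and checking that each of its factors preserves special points; everything else is bookkeeping with the diagram \eqref{eqn:descent}.
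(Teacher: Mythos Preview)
Your proof is correct and follows essentially the same route as the paper's: compute \( \phi_{\tau,h}(\tau[h,g]) = [{^\tau h},\theta_{\tau,h}(g)] \) via \refConjC(a)(i)--(ii), observe that \( {^\tau h} \) is pre-special, and then pull back through \( \phi(\tau;h)^{-1} \), which preserves special points because it is a Shimura isomorphism composed with a Hecke operator. The only cosmetic difference is that the paper phrases the passage between \( \tau(s) \) and \( \phi_{\tau,h}(\tau(\iota(s))) \) using the diagram~\eqref{eqn:descent} directly, while you invoke the descent isomorphism \( \psi_\tau \) from~\eqref{eqn:psit}; these are the same computation.
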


\begin{proof}
By \eqref{eqn:descent}, we have
\begin{equation} \label{eqn:phi-iota}
\phi(\tau;h) \circ \iota(\tau(s)) = \phi_{\tau,h} \circ (\tau\iota)(\tau(s)) = \phi_{\tau,h}(\tau(\iota(s))).
\end{equation}
By construction, \( \phi(\tau;h) \) is the composition of a Hecke operator with a Shimura isomorphism.
Hence \( \phi(\tau;h)^{-1} \) maps special points to special points.
So in order to show that \( \tau(s) \) is special, it suffices to show that
\( \phi_{\tau,h}(\tau(\iota(s))) \) is special.

Write \( \iota(s) = [h,g]_K \in \Sh_K(\gG, X)(\bC) \).
Using \refConjC(a) (ii) then~(i) gives
\begin{align}
    \phi_{\tau,h}(\tau[h,g])
  & = T_{\theta_{\tau,h}(g)} \circ \phi_{\tau,h}(\tau[h,1]) \notag
\\& = T_{\theta_{\tau,h}(g)} ([{^\tau h}, 1])
    = [{^\tau h}, \theta_{\tau,h}(g)].                      \label{eqn:phi-hg}
\end{align}
By construction, \( {^\tau h} \) is pre-special, so \( \phi_{\tau,h}(\tau[h,g]) \) is special.
\end{proof}

\begin{theorem} \label{sp-subpro}
Let \( (\gG, X) \) be a Shimura datum.
Let \( \Sh(\gH, X_\gH) \) be a Shimura sub-pro-variety of \( \Sh(\gG, X) \).
Then for every \( \tau \in \Gal(\Qbar/E(\gG, X)) \), \( \tau \Sh(\gH, X_\gH) \) is also a Shimura sub-pro-variety of \( \Sh(\gG, X) \).
\end{theorem}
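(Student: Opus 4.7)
My plan is to combine \cref{th-subdatum}, which shows that the Milne--Shih twisting respects Shimura subdata, with the descent datum for $\Sh(\gG, X)$ from \eqref{eqn:descent} to identify $\tau \Sh(\gH, X_\gH)$ with the Shimura sub-pro-variety attached to an explicit conjugate Shimura subdatum of $(\gG, X)$.

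The first step is to pick a pre-special point $h \in X_\gH \subset X$; such a point exists because $(\gH, X_\gH)$ is itself a Shimura datum. Applying the construction of \cref{sec:langlands} at $h$ simultaneously to $(\gG, X)$ and $(\gH, X_\gH)$ yields the Shimura data $(\thG, \thX)$ and $(\thH, \thXH)$, together with the inclusion $(\thH, \thXH) \subset (\thG, \thX)$ furnished by \cref{th-subdatum}, the isomorphisms $\phi_{\tau, h, \gG}$ and $\phi_{\tau, h, \gH}$ of \refConjC, and the descent morphism $\phi(\tau;h) = [f_1] \circ T_{\beta_1}$. The uniqueness statement following \refConjC, combined with the fact (\cref{th-subdatum}) that $\theta_{\tau, h, \gH}$ is the restriction of $\theta_{\tau, h, \gG}$, shows that $\phi_{\tau, h, \gG}$ restricted to $\tau \Sh(\gH, X_\gH)$ coincides with $\phi_{\tau, h, \gH}$ composed with the inclusion $\Sh(\thH, \thXH) \hookrightarrow \Sh(\thG, \thX)$; both maps agree on the dense Hecke orbit of $\tau[h,1]$. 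Hence $\phi_{\tau, h, \gG}(\tau \Sh(\gH, X_\gH)) = \Sh(\thH, \thXH)$, and the descent datum $\psi_\tau = \phi(\tau;h)^{-1} \circ \phi_{\tau, h}$ from \eqref{eqn:psit} gives $\tau \Sh(\gH, X_\gH) = \phi(\tau;h)^{-1}(\Sh(\thH, \thXH))$ inside $\Sh(\gG, X)$.

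Writing $\phi(\tau;h)^{-1} = T_{\beta_1^{-1}} \circ [f_1^{-1}]$ and using that $[f_1^{-1}]$ carries $\Sh(\thH, \thXH)$ to the Shimura sub-pro-variety $\Sh(\gH', X_{\gH'})$ with $\gH' := f_1^{-1}(\thH)$ and $X_{\gH'} := f_1^{-1}(\thXH)$, it remains to show that $T_{\beta_1^{-1}}(\Sh(\gH', X_{\gH'}))$ is a Shimura sub-pro-variety. This is the main obstacle, since a generic Hecke translate of a Shimura sub-pro-variety is not itself of this form. The approach is to exploit the freedom in the choice of $(f_1, \beta_1)$ to arrange that $\beta_1 = \gamma z$ for some $\gamma \in \gG(\bQ)$ and $z \in \ov{\gZ(\bQ)}$; once this is achieved, a direct calculation (using that $\ov{\gZ(\bQ)}$ acts trivially on $\Sh(\gG, X)$ and that the left $\gG(\bQ)$-action absorbs $\gamma$) identifies
\[ T_{\beta_1^{-1}}(\Sh(\gH', X_{\gH'})) = \Sh(\gamma \gH' \gamma^{-1}, \gamma X_{\gH'}), \]
the Shimura sub-pro-variety attached to the $\bQ$-rational conjugate subdatum $(\gamma \gH' \gamma^{-1}, \gamma X_{\gH'})$. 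Verifying that $\beta_1$ can always be chosen in $\gG(\bQ) \cdot \ov{\gZ(\bQ)}$ by a closer inspection of the Milne--Shih construction is the technical core of the proof.
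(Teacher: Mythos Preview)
Your first two paragraphs reproduce the paper's argument exactly: choose a pre-special $h\in X_\gH$, invoke \cref{th-subdatum} and \refConjC, and check on the dense Hecke orbit of $\tau[h,1]$ that $\phi_{\tau,h,\gG}$ carries $\tau\Sh(\gH,X_\gH)$ onto the Shimura sub-pro-variety $\Sh(\thH,\thXH)\subset\Sh(\thG,\thX)$. You have also put your finger on the one step the paper treats very briefly (the phrase ``by \eqref{eqn:phi-iota} and the subsequent remark''): one still has to pull $\Sh(\thH,\thXH)$ back through $\phi(\tau;h)^{-1}=T_{\beta_1^{-1}}\circ[f_1^{-1}]$ and land on a Shimura sub-pro-variety of $\Sh(\gG,X)$, and an arbitrary Hecke operator does \emph{not} have this property.

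Your proposed resolution, however, cannot work. The left coset $\beta_1\cdot\gG(\bQ)\ov{\gZ(\bQ)}$ is an invariant of $\phi(\tau;h)$, not a choice: if $[f_1]\circ T_{\beta_1}=[f_1']\circ T_{\beta_1'}$ then $[f_1^{-1}f_1']=T_{\beta_1'^{-1}\beta_1}$, and a Shimura automorphism equals a Hecke operator $T_a$ only when $a\in\gG(\bQ)\ov{\gZ(\bQ)}$. And this coset is nontrivial already for $\tau$ fixing $E(h)$: comparing $\psi_\tau(\tau[h,1])=[(f_1^{-1})_*({}^\tau h),\,\beta_1^{-1}]$ with the reciprocity description $\tau\cdot[h,1]=[h,r(\tau)]$ from the definition of canonical model gives $\beta_1^{-1}=\gamma\,r(\tau)\,z$ with $\gamma\in\gG(\bQ)$, $z\in\ov{\gZ(\bQ)}$ and $r(\tau)\in\gM(\bA_f)$ ($\gM=\MT(h)$); for generic $\tau$ the reciprocity element $r(\tau)$ does not lie in $\gM(\bQ)\ov{\gZ(\bQ)}$. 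What \emph{does} hold is the weaker decomposition $\beta_1^{-1}=\gamma\cdot m\cdot z$ with $m$ in the torus $\gM$ and $\gamma\gM\gamma^{-1}\subset\gH'$; this is enough to conclude $T_{\beta_1^{-1}}\bigl(\Sh(\gH',X_{\gH'})\bigr)=\Sh(\gamma^{-1}\gH'\gamma,\gamma^{-1}X_{\gH'})$. So the ``technical core'' you should aim for is that the adelic part of $\beta_1$ lies in the torus through which $h$ factors (this is what the formula $\beta_1=\tilde\beta(\tau,h)\,v^{-1}$ with $\tilde\beta(\tau,h)\in\gT(\bA_f\otimes L)$ in the Milne--Shih construction is encoding), not that $\beta_1$ is rational up to centre.
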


\begin{proof}
Choose a pre-special point \( h \in X_\gH \subset X \).
Let
\begin{gather*}
   \phi_{\tau,h,\gG} \colon \tau \Sh(\gG, X) \to \Sh(\thG, \thX),
\\ \phi_{\tau,h,\gH} \colon \tau \Sh(\gH, X_\gH) \to \Sh(\thH, \thXH)
\end{gather*}
be the isomorphisms of \refConjC{}(a) for \( (\gG, X) \) and \( (\gH, X_\gH) \) respectively.

By \eqref{eqn:phi-iota} and the subsequent remark, it suffices to show that \( \phi_{\tau,h,\gG}(\tau \Sh(\gH, X_\gH)) \) is a Shimura sub-pro-variety of \( \Sh(\thG, \thX) \).
By \cref{th-subdatum}, \( (\thH, \thXH) \) is a Shimura subdatum of \( (\thG, \thX) \) so \( \Sh(\thH, \thXH) \) is a Shimura sub-pro-variety of \( \Sh(\thG, \thX) \).
Thus it suffices to show that the following diagram commutes. 
\begin{equation} \label{inclusion-commutes}
\vcenter{\xymatrix@C+2pc{
    \tau \Sh(\gH, X_\gH)  \ar[r]^{\phi_{\tau,h,\gH}}  \ar@{^{(}->}[d]
  & \Sh(\thH, \thXH)      \ar@{^{(}->}[d]
\\  \tau \Sh(\gG, X)      \ar[r]^{\phi_{\tau,h,\gG}}
  & \Sh(\thG, \thX).
}}
\end{equation}
After translating notation, this is precisely the assertion in the proof of \cite[Lemma~9.5]{MS82b} that \( \phi_{\tau,h,\gG} \) maps \( \tau \Sh(\gH, X_\gH) \) into \( \Sh(\thH, \thXH) \).

For completeness, we prove this assertion.
Using equation~\eqref{eqn:phi-hg} for both \( \gG \) and~\( \gH \) and the fact that \( \theta_{\tau,h,\gG} \) restricts to \( \theta_{\tau,h,\gH} \), we get the following for every \( g \in \gH(\bA_f) \):
\begin{align*}
     \phi_{\tau,h,\gG}(\tau[h,g])
   = [{^\tau h}, \theta_{\tau,h,\gG}(g)]
   = [{^\tau h}, \theta_{\tau,h,\gH}(g)]
   = \phi_{\tau,h,\gH}(\tau[h,g]).
\end{align*}
Since \( \{ \tau[h,g] : g \in \gH(\bA_f) \} \) is dense in \( \tau \Sh(\gH, X_\gH) \), this shows that \eqref{inclusion-commutes} commutes.
\end{proof}

\begin{corollary}
Let \( S \) be the canonical model of a Shimura variety \( \Sh_K(\gG, X) \).
Let \( Z \subset S \) be a special subvariety, defined over \( \Qbar \).
Then for every \( \tau \in \Gal(\Qbar/E(\gG, X)) \), \( \tau(Z) \) is also a special subvariety of \( S \).
\end{corollary}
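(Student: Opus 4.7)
The plan is to deduce the corollary from \cref{sp-subpro} by lifting $Z$ to the Shimura pro-variety. Write $\pi \colon \Sh(\gG, X) \to \Sh_K(\gG, X)$ for the natural projection, and let $\iota_\gH \colon \Sh(\gH, X_\gH) \hookrightarrow \Sh(\gG, X)$ be the Shimura sub-pro-variety inclusion underlying the Shimura morphism $[f]$ that exhibits $Z$ as a special subvariety. A routine unravelling of the definitions at finite versus pro-variety level shows that the image of $T_g \circ [f]$ in $\Sh_K(\gG, X)$ coincides with $\pi(T_g(\iota_\gH(\Sh(\gH, X_\gH))))$, so $Z$ is a geometrically irreducible component of this closed subset.

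Next I would observe that both $\pi$ and the pro-variety Hecke morphism $T_g \colon \Sh(\gG, X) \to \Sh(\gG, X)$ are morphisms of canonical models over $E(\gG, X)$, and are therefore $\tau$-equivariant for every $\tau \in \Gal(\Qbar/E(\gG, X))$ via the descent datum \eqref{eqn:descent}. Hence $\tau(Z)$ is a geometrically irreducible component of
\[ \pi\bigl(T_g(\tau \iota_\gH(\Sh(\gH, X_\gH)))\bigr). \]
By \cref{sp-subpro}, $\tau \iota_\gH(\Sh(\gH, X_\gH))$ is again a Shimura sub-pro-variety of $\Sh(\gG, X)$, arising from some Shimura subdatum $(\gH', X_{\gH'})$ of $(\gG, X)$. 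Choosing any compact open subgroup $K'_{\gH'} \subset \gH'(\bA_f)$ whose image in $\gG(\bA_f)$ is contained in $K$, I would then recognise $\tau(Z)$ as a geometrically irreducible component of the image of $T_g$ applied to the image of the Shimura morphism $\Sh_{K'_{\gH'}}(\gH', X_{\gH'}) \to \Sh_K(\gG, X)$, which is by definition a special subvariety of $S$.

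The substantive geometric content is already in \cref{sp-subpro}; the only additional things to pin down are the commutations $\tau \pi = \pi \tau$ and $\tau T_g = T_g \tau$ and the dictionary between finite-level Hecke correspondences and pro-variety Hecke morphisms. Both are formal consequences of the canonical-model formalism recalled in \cref{sec:langlands}, so I expect no serious obstacle beyond careful bookkeeping of the levels and the choice of $K'_{\gH'}$.
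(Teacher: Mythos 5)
Your plan follows essentially the same route as the paper's proof: factor out the Hecke correspondence, pass to the pro-variety level, and apply \cref{sp-subpro}. However, there is a gap at the very first step. You write ``let \( \iota_\gH \colon \Sh(\gH, X_\gH) \hookrightarrow \Sh(\gG, X) \) be the Shimura sub-pro-variety inclusion underlying the Shimura morphism \( [f] \),'' but the definition of a special subvariety only gives you a Shimura morphism \( [f] \) induced by \emph{some} morphism of Shimura data \( f \colon (\gH, X_\gH) \to (\gG, X) \), which need not be injective (for instance \( f \) may have a nontrivial kernel). In that case \( [f] \) is not a closed immersion, \( (\gH, X_\gH) \) is not a Shimura subdatum of \( (\gG, X) \), and \cref{sp-subpro} does not directly apply to the image of \( [f] \). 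The paper closes exactly this gap by first invoking Pink's Proposition 4.3 to replace \( (\gH, X_\gH) \) with a Shimura subdatum of \( (\gG, X) \) having the same image; once \( f \) is injective your argument goes through.

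A second, smaller, point: you also need to justify that every irreducible component of the Hecke translate of a special subvariety is special (the paper states this and uses that \( T_g \) is defined over \( E(\gG, X) \)). This is routine, but it is part of the bookkeeping you defer, so it is worth saying explicitly that the finite-level Hecke correspondence factors through the pro-variety Hecke operator and that both descend to the canonical model. With the Pink reduction inserted at the start and this commutativity made explicit, your proof coincides with the paper's.
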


\begin{proof}
By definition, \( Z \) is a geometrically irreducible component of the image of \( T_g \circ [f] \) for some morphism of Shimura data \( f \colon (\gH, X_\gH) \to (\gG, X) \) and some Hecke correspondence \( T_g \) on \( \Sh_K(\gG, X) \).
The Hecke correspondence \( T_g \) is defined over \( E(\gG, X) \) and every component of the image of a special subvariety under a Hecke correspondence is special.
Hence it suffices to show that each component of the image of \( \tau[f] \colon \Sh_{K_\gH}(\gH, X_\gH) \to \Sh_K(\gG, X) \) is a special subvariety.

By \cite[Prop.~4.3]{Pin05}, we may assume that \( f \) is injective.
Then \( \tau \Sh(\gH, X_\gH) \) is a Shimura sub-pro-variety of \( \Sh(\gG, X) \) by \cref{sp-subpro}.
Projecting down to finite level, we deduce that every geometrically irreducible component of the image of \( \tau \Sh_{K_\gH}(\gH, X_\gH) \) in \( \Sh_K(\gG, X) \) is indeed a special subvariety.
\end{proof}

\section{Conjugation and complexity} \label{sec:conj-complexity}

We conclude the paper by proving \cref{galois-complexity}.
We first need the following lemma on morphisms of Shimura varieties.

\begin{lemma} \label{zeta}
Let \( f \colon (\gG_1, X_1) \to (\gG_2, X_2) \) be an isomorphism of Shimura data and let \( \beta \in \gG_1(\bA_f) \).
Define a morphism of pro-varieties by
\[ \phi = [f] \circ T_\beta \colon \Sh(\gG_1, X_1) \to \Sh(\gG_2, X_2). \]
Let \( \theta \colon \gG_1(\bA_f) \to \gG_2(\bA_f) \) be the continuous group homomorphism
\( \theta(g) = f(\beta^{-1} g \beta) \).
Let \( \theta' \colon \gG_1(\bA_f) \to \gG_2(\bA_f) \) be any continuous group homomorphism.

Then
\[ \phi \circ T_g = T_{\theta'(g)} \circ \phi \text{ for all } g \in \gG_1(\bA_f) \]
if and only if \( \theta' \) has the form \( \theta'(g) = \zeta(g) \theta(g) \) for some continuous homomorphism \( \zeta \colon \gG_1(\bA_f) \to \ov{\gZ_2(\bQ)} \),
where \( \gZ_2 \) denotes the centre of \( \gG_2 \).
\end{lemma}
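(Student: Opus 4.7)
My plan is to split the biconditional into two pieces: first verify that the stated $\theta$ itself satisfies the intertwining identity by a direct computation, and then reduce the characterisation of an arbitrary $\theta'$ to the identification of the kernel of the right action of $\gG_2(\bA_f)$ on $\Sh(\gG_2, X_2)$.

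For the first piece I would unwind the definitions on complex points: $\phi[h,q] = [f\circ h,\, f(q)f(\beta)]$, so
$\phi \circ T_g[h,q] = [f\circ h,\, f(q)f(g)f(\beta)]$ while $T_{\theta(g)} \circ \phi[h,q] = [f\circ h,\, f(q)f(\beta)\theta(g)]$. For the specific choice $\theta(g) = f(\beta^{-1}g\beta)$ one has $f(\beta)\theta(g) = f(g)f(\beta)$, so these expressions are literally equal and $\phi \circ T_g = T_{\theta(g)} \circ \phi$.

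For the second piece, since $\phi$ is an isomorphism, the condition $\phi \circ T_g = T_{\theta'(g)}\circ \phi$ combined with $\phi \circ T_g = T_{\theta(g)} \circ \phi$ is equivalent to $T_{\theta'(g)\theta(g)^{-1}}$ being the identity on $\Sh(\gG_2, X_2)$ for every $g$. The description \eqref{eqn:spv-c-pts} shows that the right $\gG_2(\bA_f)$-action factors through $\gG_2(\bA_f)/\ov{\gZ_2(\bQ)}$. For the reverse containment, if $q\in \gG_2(\bA_f)$ acts trivially, then at a Hodge generic $h \in X_2$ (such points exist, as recalled in \cref{sec:svs}) the relation $[h,q] = [h,1]$ yields $\gamma \in \gG_2(\bQ)$ fixing $h$ with $q\gamma^{-1} \in \ov{\gZ_2(\bQ)}$. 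Such a $\gamma$ centralises $\MT(h) = \MT(X_2)$, and axiom (iii) of a Shimura datum gives $\MT(X_2)^{\ad} = \gG_2^{\ad}$, whence $\MT(X_2)\cdot \gZ_2 = \gG_2$ and the centraliser of $\MT(X_2)$ in $\gG_2$ equals $\gZ_2$. Thus $\gamma \in \gZ_2(\bQ)$ and $q \in \ov{\gZ_2(\bQ)}$, so the kernel of the action is exactly $\ov{\gZ_2(\bQ)}$.

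Finally, set $\zeta(g) = \theta'(g)\theta(g)^{-1}$; the previous paragraph shows that the intertwining condition for $\theta'$ is equivalent to $\zeta$ taking values in $\ov{\gZ_2(\bQ)}$. Because $\zeta(g)$ is then central, it commutes with every $\theta(g')$, and substituting $\theta'(g) = \zeta(g)\theta(g)$ into $\theta'(gg') = \theta'(g)\theta'(g')$ (part of the hypothesis on $\theta'$) shows this identity is equivalent to $\zeta(gg') = \zeta(g)\zeta(g')$; continuity of $\zeta$ follows from continuity of $\theta$, $\theta'$, and the group operations in $\gG_2(\bA_f)$. The main obstacle in the argument is the kernel computation, which is the only place where the Shimura datum axioms (specifically axiom (iii) together with the existence of Hodge generic points) play a substantive role; everything else is bookkeeping.
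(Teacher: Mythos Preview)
Your proposal is correct and follows essentially the same approach as the paper. Both arguments reduce to the same computation: the double-coset description~\eqref{eqn:spv-c-pts}, a Hodge generic point, and axiom~(iii) (equivalently $\gG_2^\der \subset \MT(X_2)$) to identify the centraliser of $\MT(X_2)$ with $\gZ_2$. Your organisation is marginally cleaner in that you first verify the intertwining for $\theta$ and then isolate the key content as the statement ``the kernel of the right $\gG_2(\bA_f)$-action on $\Sh(\gG_2,X_2)$ is exactly $\ov{\gZ_2(\bQ)}$'', whereas the paper carries the parameter $a$ through a single computation; but the substance is identical.
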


\begin{proof}
Let \( \zeta(g) = \theta'(g) \theta(g)^{-1} \).

For every \( h \in X_1 \) and \( a, g \in \gG_1(\bA_f) \), we can calculate
\begin{gather*}
   T_{\theta'(g)} \phi([h,a]) = T_{\theta'(g)} [f] T_\beta([h,a]) = [f_*(h), f(a\beta) \theta'(g)] = [f_*(h), f(a\beta) \zeta(g) \theta(g)],
\\ \phi T_g([h,a]) = [f] T_\beta T_g([h,a]) = [f_*(h), f(ag\beta)] = [f_*(h), f(a\beta) \theta(g)].
\end{gather*}
Since \( T_{\theta(g)} \) is invertible, we deduce that \( \phi \circ T_g = T_{\theta'(g)} \circ \phi \) if and only if
\begin{equation} \label{eqn:equiv}
[f_*(h), f(a\beta)] = [f_*(h), f(a\beta) \zeta(g)] \text{ for all } h \in X_1, a \in \gG_1(\bA_f).
\end{equation}

From the double quotient description~\eqref{eqn:spv-c-pts} of \( \Sh(\gG_2, X_2)(\bC) \), we see immediately that, if the image of \( \zeta \) lies in \( \ov{\gZ_2(\bQ)} \), then \eqref{eqn:equiv} holds.

Now for the converse.
Assume that \eqref{eqn:equiv} holds for every \( g \in \gG_1(\bA_f) \).
Choose a Hodge generic point \( h \in X_1 \).
Thanks to \eqref{eqn:spv-c-pts} and \eqref{eqn:equiv}, for every \( a,g \in \gG_1(\bA_f) \), there exist \( \gamma(a,g) \in \gG_2(\bQ) \) and \( \nu(a,g) \in \ov{\gZ_2(\bQ)} \) such that
\begin{align}
   \gamma(a,g) \, f_*(h) &= f_*(h),                                  \label{mu-fh}
\\ \gamma(a,g) \, f(a\beta) \, \nu(a,g) &= f(a\beta) \zeta(g).   \label{mu-fg}
\end{align}

According to \cite[Lemma~2.2]{UY13}, the intersection of \( \gG_2(\bQ) \) with the stabiliser of \( f_*(h) \) in \( \gG_2(\bR) \) is equal to the \( \bQ \)-points of the centraliser of \( \MT(f_*(h)) \), that is,
\begin{equation} \label{eqn:stab-q}
\gG_2(\bQ) \cap \Stab_{\gG_2(\bR)}(f_*(h)) = Z_{\gG_2}(\MT(f_*(h)))(\bQ).
\end{equation}
Since \( h \) is Hodge generic in \( X_1 \) and \( f \) is an isomorphism, \( f_*(h) \) is Hodge generic in~\( X_2 \).
By the axiom \cite[2.1.1.3]{Del79}, \( \gG_2^\der \subset \MT(X_2) \).
Hence
\begin{equation} \label{eqn:centralisers}
Z_{\gG_2}(\MT(f_*(h))) = Z_{\gG_2}(\MT(X_2)) \subset Z_{\gG_2}(\gG_2^\der).
\end{equation}
Since \( \gG_2 \) is reductive, \( Z_{\gG_2}(\gG_2^\der) = \gZ_2 \).
Hence \eqref{mu-fh}, \eqref{eqn:stab-q} and \eqref{eqn:centralisers} imply that \( \gamma(a,g) \in \gZ_2(\bQ) \).
Since \( \gamma(a,g) \) and \( \nu(a,g) \) both lie in \( \ov{\gZ_2(\bQ)} \subset \gZ_2(\bA_f) \), \eqref{mu-fg} gives
\[ \gamma(a,g) \nu(a,g) = \zeta(g). \]
We conclude that \( \zeta(g) \) lies in \( \ov{\gZ_2(\bQ)} \) and that \( \zeta \) is a group homomorphism.
\end{proof}

The following proposition is the main step in the proof of \cref{galois-complexity}.

\begin{proposition} \label{galois-special-mt}
Let \( (\gG, X) \) be a Shimura datum and let \( [h,g] \in \Sh(\gG, X) \) be a special point.
For every \( \tau \in \Gal(\Qbar/E(\gG, X)) \),
if \( \tau[h,g] = [h_\tau,g_\tau] \in \Sh(\gG, X) \), then
there exist:
\begin{enumerate}[(a)]
\item an isomorphism of \( \bQ \)-tori \( f_2 \colon \MT(h_\tau) \to \MT(h) \);
\item an automorphism \( \alpha \) of the topological group \( \MT(h)(\bA_f) \) such that
\( \alpha \circ f_2 \) maps \( g_\tau K g_\tau^{-1} \cap \MT(h_\tau)(\bA_f) \) onto \( gKg^{-1} \cap \MT(h)(\bA_f) \) for each compact open subgroup \( K \subset \gG(\bA_f) \).
\end{enumerate}
\end{proposition}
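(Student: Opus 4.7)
The plan is to analyze the identity
\[ \phi(\tau;h)([h_\tau,g_\tau]) = \phi_{\tau,h}(\tau[h,g]) = [{^\tau h},\theta_{\tau,h}(g)] \]
in \(\Sh(\thG,\thX)\), which follows from the descent diagram \eqref{eqn:descent} together with \eqref{eqn:phi-hg}. Writing \(\phi(\tau;h)=[f_1]\circ T_{\beta_1}\), the left-hand side equals \([f_{1*}(h_\tau),f_1(g_\tau\beta_1)]\), so the double-coset description of \(\Sh(\thG,\thX)\) yields \(q\in\thG(\bQ)\) and \(w\in\ov{\thZ(\bQ)}\) with \({^\tau h}=\operatorname{Int}(q)\circ f_{1*}(h_\tau)\) and \(\theta_{\tau,h}(g)=q\,f_1(g_\tau\beta_1)\,w\). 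Comparing Mumford--Tate groups in the first of these identities, and using \cref{thT} to identify \(\gM=\MT(h)\) with \(\MT({^\tau h})\) as subtori of \(\thG\), gives \(\gM=q\,f_1(\MT(h_\tau))\,q^{-1}\). Therefore \(f_2:=\operatorname{Int}(q)\circ f_1|_{\MT(h_\tau)}\) is a \(\bQ\)-isomorphism \(\MT(h_\tau)\to\gM\), proving (a).

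For (b) I first need an auxiliary relation between \(\beta_1\) and \(\gM\). Applying the same argument to the point \([h,1]\) (using \(\phi(\tau;h)([h,1])=[{^\tau h},1]\), which follows from \refConjC(a)(i) together with the descent equation) and again comparing Mumford--Tate groups yields \(f_1^{-1}(\gM)=\beta_1^{-1}\,\gM\,\beta_1\) in \(\gG\). This identity guarantees that the formula \(\alpha(x):=\beta_1\,f_1^{-1}(x)\,\beta_1^{-1}\) defines a continuous automorphism of the topological group \(\gM(\bA_f)\). To pin down \(\alpha\circ f_2\), I use \cref{zeta} applied to \(\phi(\tau;h)\) to write \(\theta_{\tau,h}(a)=\zeta(a)\,f_1(\beta_1^{-1}a\beta_1)\) with \(\zeta\) taking values in \(\ov{\thZ(\bQ)}\). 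Substituting this into \(\theta_{\tau,h}(g)=q\,f_1(g_\tau\beta_1)\,w\) and cancelling central factors produces \(q\,f_1(g_\tau)=c\,f_1(\beta_1^{-1}g)\) for some \(c\in\ov{\thZ(\bQ)}\); applying \(f_1^{-1}\) and premultiplying by \(\beta_1\) then yields \(\beta_1\,f_1^{-1}(q)=f_1^{-1}(c)\,g\,g_\tau^{-1}\). A short calculation exploiting the centrality of \(f_1^{-1}(c)\) gives \(\alpha(f_2(m))=g\,g_\tau^{-1}\,m\,g_\tau\,g^{-1}\) for every \(m\in\MT(h_\tau)(\bA_f)\). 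Thus \(\alpha\circ f_2\) is conjugation by \(g\,g_\tau^{-1}\), which sends \(g_\tau K g_\tau^{-1}\cap\MT(h_\tau)(\bA_f)\) onto \(gKg^{-1}\cap\gM(\bA_f)\) for every compact open \(K\subset\gG(\bA_f)\), as required.

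The main technical obstacle is the careful tracking of the central elements arising from the \(\ov{\thZ(\bQ)}\)-ambiguity in the double-coset description of \(\Sh(\thG,\thX)\): these absorb cleanly only after one systematically replaces \(\theta_{\tau,h}\) by the manifestly tractable homomorphism \(a\mapsto f_1(\beta_1^{-1}a\beta_1)\) via \cref{zeta}. Once the auxiliary identity \(f_1^{-1}(\gM)=\beta_1^{-1}\gM\beta_1\) is established, the remaining manipulations are mechanical.
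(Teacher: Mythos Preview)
Your argument for part~(a) is correct and essentially identical to the paper's (your $q$ is the paper's $\gamma^{-1}$).

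For part~(b), however, there is a genuine gap. The identity $\phi(\tau;h)([h,1])=[{^\tau h},1]$ does \emph{not} follow from \refConjC(a)(i) together with the descent diagram. What those give is $\phi_{\tau,h}(\tau[h,1])=[{^\tau h},1]$ and $\phi_{\tau,h}=\phi(\tau;h)\circ\psi_\tau$, hence $\phi(\tau;h)\bigl(\psi_\tau(\tau[h,1])\bigr)=[{^\tau h},1]$. The point $\psi_\tau(\tau[h,1])$ is the \emph{Galois conjugate} of $[h,1]$ in the canonical model, not $[h,1]$ itself. In fact the identity you claim is false in general: combined with the Hecke equivariance $\phi(\tau;h)\circ T_g=T_{\theta_{\tau,h}(g)}\circ\phi(\tau;h)$, it would force $\phi(\tau;h)([h,g])=[{^\tau h},\theta_{\tau,h}(g)]=\phi_{\tau,h}(\tau[h,g])$ for every $g$, and hence $\psi_\tau(\tau[h,g])=[h,g]$; this says the whole Hecke orbit of $[h,1]$ is pointwise $\tau$-fixed in the canonical model, contradicting the nontrivial reciprocity action on special points.

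Consequently your auxiliary relation $f_1^{-1}(\gM)=\beta_1^{-1}\gM\beta_1$ (equivalently, that $\theta'_{\tau,h}(g')=f_1(\beta_1^{-1}g'\beta_1)$ preserves $\gM(\bA_f)$) is unproved, and this is precisely the crux of part~(b). The paper establishes this fact not via the descent diagram but by opening up the Milne--Shih construction of $f_1$ and $\beta_1$: one writes $f_1=f\circ\Inn(v^{-1})$ and $\beta_1=\tilde\beta(\tau,h)\,v^{-1}$ with $\tilde\beta(\tau,h)\in\gT(\bA_f\otimes_\bQ L)$, so that $\theta'_{\tau,h}(g')=f(\tilde\beta(\tau,h)^{-1}g'\tilde\beta(\tau,h))$; since $\tilde\beta(\tau,h)$ lies in the torus $\gT\supset\gM$ it centralises $\gM(\bA_f)$, and since $f$ is twisting by an element of ${^\tau\cS}(\Qbar)$ it maps $\gM$ to $\thM$. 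Once this is in hand, the remaining manipulations you outline (via \cref{zeta}) are indeed routine and match the paper's conclusion.
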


\begin{proof}
By \eqref{eqn:phi-iota} and \eqref{eqn:phi-hg}, we have
\[ \phi(\tau;h)[h_\tau,g_\tau] = \phi_{\tau,h}(\tau [h, g]) = [{^\tau h}, \theta_{\tau,h}(g)] \text{ in } \Sh(\thG, \thX). \]
Write \( \phi(\tau;h) = [f_1] \circ T_{\beta_1} \) as in \cite[Prop.~7.8]{MS82b}.
Recalling the double coset description \eqref{eqn:spv-c-pts} of \( \Sh(\thG, \thX)(\bC) \), we get
\begin{gather}
   f_{1*}(h_\tau) = \gamma \, {^\tau h},  \label{f-h}
\\ f_1(g_\tau \beta_1) = \gamma \theta_{\tau,h}(g)z. \label{f-g}
\end{gather}
for some \( \gamma \in \thG(\bQ) \) and \( z \in \ov{\thZ(\bQ)} \).

Thanks to \eqref{f-h}, \( f_2 := \Inn(\gamma^{-1}) \circ f_1 \) restricts to an isomorphism of \( \bQ \)-tori
\[ \MT(h_\tau) \to \MT({^\tau h}) = \MT(h). \]
This proves (a).

Define \( \theta_{\tau,h}' \colon \gG(\bA_f) \to \thG(\bA_f) \) by
\[ \theta_{\tau,h}'(g') = f_1(\beta_1^{-1} g' \beta_1). \]
Using \eqref{f-g}, we can calculate
\begin{align} \label{eqn:f2}
    f_2(g_\tau K g_\tau^{-1})
  & = \gamma^{-1} f_1(g_\tau \beta_1) \, f_1(\beta_1^{-1} K \beta_1) \, f_1(\beta_1^{-1} g_\tau^{-1}) \gamma   \notag
\\& = \theta_{\tau,h}(g) z \, \theta_{\tau,h}'(K) \, z^{-1} \theta_{\tau,h}(g)^{-1}.
\end{align}

Because \( \phi(\tau;h) \) satisfies property~(i) from p.~\pageref{phi-tau-h}, we can apply \cref{zeta} to establish that
\( \theta_{\tau,h}(g) = \zeta(g) \theta_{\tau,h}'(g) \) for some \( \zeta(g) \in \ov{\thZ(\bQ)} \).
Substituting this into~\eqref{eqn:f2} and using the fact that \( z \) and \( \zeta(g) \) are in \( \ov{\thZ(\bQ)} \subset \thZ(\bA_f) \), we get
\begin{equation} \label{eqn:f2-theta}
f_2(g_\tau K g_\tau^{-1}) = \theta_{\tau,h}'(gKg^{-1}).
\end{equation}

Let \( \gM = \MT(h) \subset \gG \) and \( \thM = \MT({^\tau h}) \subset \thG \).
Let \( \gT \subset \gG \) denote a maximal \( \bQ \)-torus such that \( h \) factors through \( \gT_\bR \) and let \( \thT = {^\tau\cS} \times^\cS \gT \subset \thG \).
Since \( \cS \) acts trivially on~\( \gT \), twisting by \( {^\tau\cS} \) gives a bijection between \( \bQ \)-algebraic subgroups of \( \gT \) and \( \bQ \)-algebraic subgroups of \( \thT \), as explained in \cref{thT}.
Hence \( \thM = {^\tau\cS} \times^\cS \gM \).

We claim that \( \theta'_{\tau,h} \) restricts to an isomorphism \( \gM(\bA_f) \to \thM(\bA_f) \).
In order to establish this, we look into the definition of \( f_1 \) and \( \beta_1 \) at \cite[p.~328--329]{MS82b}.
Choose any element \( a(\tau) \in {^\tau\cS}(\Qbar) \) and define an isomorphism \( f \colon \gG_{\Qbar} \to \thG_{\Qbar} \) by
\[ f(g) = a(\tau).g. \]
Let \( L \) be a number field over which \( f \) is defined.
Milne and Shih use the Taniyama group to construct an element
\[ \tilde{\beta}(\tau,h) \in \gT(\bA_f \otimes_\bQ L). \]
The cocycle \( \tilde\beta(\tau,h)^{-1} \cdot \sigma\tilde\beta(\tau,h) \) becomes trivial in \( \rH^1(\bQ, \gG) \), and hence it is the coboundary of some element \( v \in \gG(\Qbar) \).
Define \( f_1 \colon \gG \to \thG \) and \( \beta_1 \in \gG(\bA_f) \) by
\begin{gather*}
   f_1 = f \circ \Inn(v^{-1}),
\\ \beta_1 = \tilde\beta(\tau,h) \, v^{-1}.
\end{gather*}
From these descriptions of \( f_1 \) and \( \beta_1 \), we can read off
\begin{equation} \label{eqn:theta-beta}
\theta_{\tau,h}'(g') = f(\tilde\beta(\tau,h)^{-1} \, g' \, \tilde\beta(\tau,h))
\end{equation}
(as elements of \( \thG(\bA_f \otimes_\bQ L) \)) for all \( g' \in \gG(\bA_f) \).

Since \( \tilde\beta(\tau,h) \in \gT(\bA_f \otimes_\bQ L) \) and \( \gT \) is a torus containing \( \gM \), \( \tilde\beta(\tau,h) \) commutes with \( \gM(\bA_f) \).
Furthermore, since \( f \) is defined as twisting by an element of \( {^\tau\cS}(\Qbar) \), it maps \( \gM \) to \( \thM \).
Thus \eqref{eqn:theta-beta} shows that \( \theta_{\tau,h}' \) maps \( \gM(\bA_f) \) into \( \thM(\bA_f \otimes_\bQ L) \).
By definition, \( \theta_{\tau,h}' \) maps \( \gG(\bA_f) \) into \( \thG(\bA_f) \), so in fact it maps \( \gM(\bA_f) \) into \( \thM(\bA_f) \).

By the same argument, \( \theta_{\tau,h}'^{-1} \) maps \( \thM(\bA_f) \) into \( \gM(\bA_f) \).
Thus \( \theta_{\tau,h}' \) restricts to an isomorphism \( \gM(\bA_f) \to \thM(\bA_f) \).

Composing \( \theta_{\tau,h}'^{-1} \) with the identification \( \thM(\bA_f) = \gM(\bA_f) \) gives an automorphism \( \alpha \) of \( \gM(\bA_f) \).
Now \eqref{eqn:f2-theta} shows that
\[ f_2(g_\tau K g_\tau^{-1}) \cap \gM(\bA_f) = \alpha^{-1}(gKg^{-1} \cap \gM(\bA_f)). \]
Thus \( \alpha \) is the automorphism required for~(b).
\end{proof}

\begin{corollary*} (\cref{galois-complexity})
Let \( S = \Sh_K(\gG, X) \) be the canonical model of a Shimura variety over the reflex field \( E_\gG = E(\gG, X) \).
Let \( s \in S \) be a special point.
Then for every \( \tau \in \Gal(\Qbar/E_\gG) \),  we have \( \Delta(\tau(s)) = \Delta(s) \).
\end{corollary*}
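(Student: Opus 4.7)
The plan is to derive \cref{galois-complexity} essentially as an immediate corollary of \cref{galois-special-mt}, by checking that each of the two invariants that make up $\Delta$ is preserved under the data $(f_2, \alpha)$ provided there.

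First I would fix a lift $s = [h,g]_K$, choose a lift $\tau(s) = [h_\tau, g_\tau]_K$, and write $\gM = \MT(h)$, $\gM_\tau = \MT(h_\tau)$, $K_\gM = gKg^{-1} \cap \gM(\bA_f)$, $K_{\gM_\tau} = g_\tau K g_\tau^{-1} \cap \gM_\tau(\bA_f)$, and let $K_\gM^m, K_{\gM_\tau}^m$ be the unique maximal compact subgroups. \cref{galois-special-mt}(a) provides an isomorphism of $\bQ$-tori $f_2 \colon \gM_\tau \to \gM$. Since the splitting field of a $\bQ$-torus is an invariant of its isomorphism class (it is the fixed field of the kernel of the Galois action on the character lattice), $f_2$ forces $\gM$ and $\gM_\tau$ to have the same splitting field, so $D_{\gM_\tau} = D_\gM$.

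For the index term, \cref{galois-special-mt}(b) produces a continuous automorphism $\alpha$ of $\gM(\bA_f)$ such that the composite $\alpha \circ f_2 \colon \gM_\tau(\bA_f) \to \gM(\bA_f)$ is a topological group isomorphism sending $K_{\gM_\tau}$ onto $K_\gM$. The key observation is that the unique maximal compact subgroup of the adelic points of a $\bQ$-torus is intrinsically characterised (it is the union of all compact open subgroups, or equivalently the unique largest compact subgroup), so any continuous group automorphism of $\gM(\bA_f)$ and any topological group isomorphism $\gM_\tau(\bA_f) \to \gM(\bA_f)$ must carry maximal compact to maximal compact. Hence $\alpha \circ f_2$ also sends $K_{\gM_\tau}^m$ onto $K_\gM^m$, and therefore
\[ [K_{\gM_\tau}^m : K_{\gM_\tau}] = [K_\gM^m : K_\gM]. \]

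Combining the two equalities yields $\Delta(\tau(s)) = \Delta(s)$ by the very definition of $\Delta$. There is essentially no obstacle at this stage: all of the difficult work (tracking the Mumford--Tate group and the level structure under the descent datum $\psi_\tau$, and in particular the subtle matching $f_2(g_\tau K g_\tau^{-1}) \cap \gM(\bA_f) = \alpha^{-1}(gKg^{-1} \cap \gM(\bA_f))$) has been done inside \cref{galois-special-mt}. The only mild care needed in writing the corollary is to verify that the two characterisations used -- ``isomorphic $\bQ$-tori have the same splitting field'' and ``topological automorphisms of $\gM(\bA_f)$ preserve the unique maximal compact subgroup'' -- are invoked correctly; both are standard and require no calculation.
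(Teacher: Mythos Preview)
Your proposal is correct and follows essentially the same route as the paper's own proof: invoke \cref{galois-special-mt}(a) to match the splitting fields (hence the discriminants), and use that $\alpha\circ f_2$ is a topological group isomorphism to match both $K_{\gM_\tau}^m$ with $K_\gM^m$ and $K_{\gM_\tau}$ with $K_\gM$, giving equality of the indices. The only minor imprecision is that you should choose $(h_\tau,g_\tau)$ via the pro-level equality $\tau[h,g]=[h_\tau,g_\tau]\in\Sh(\gG,X)$ (as in the hypothesis of \cref{galois-special-mt}) and then project to level~$K$, rather than picking an arbitrary finite-level lift of $\tau(s)$; this is harmless since $\Delta$ is lift-independent, but it is what makes the proposition directly applicable.
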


\begin{proof}
Choose \( h \in X \) and \( g \in \gG(\bA_f) \) such that \( s = [h,g]_K \).
Let \( \tau[h,g] = [h_\tau, g_\tau] \in \Sh(\gG, X) \).
Then \( \tau(s) = [h_\tau, g_\tau]_K \).

We write \( \gM = \MT(h) \), \( D_\gM \), \( K_\gM^m \), \( K_\gM \) as in the definition of complexity of \( s \), and \( \gM_\tau = \MT(h_\tau) \), \( D_{\gM_\tau} \), \( K_{\gM_\tau}^m \), \( K_{\gM_\tau} \) for the analogous objects attached to \( \tau(s) \).

\Cref{galois-special-mt}(1) implies that the discriminants \( D_\gM \) and \( D_{\gM_\tau} \) are equal.

Let \( f_2 \colon \gM_\tau \to \gM \) and \( \alpha \colon \gM(\bA_f) \to \gM(\bA_f) \) be as in \cref{galois-special-mt}.
Since both \( f_2 \) and \( \alpha \) induce isomorphisms of topological groups on the adelic points, \( \alpha \circ f_2 \) maps \( K_{\gM_\tau}^m \) to \( K_\gM^m \).
By \cref{galois-special-mt}(2), \( \alpha \circ f_2 \) maps
\[ K_{\gM_\tau} = g_\tau K g_\tau^{-1} \cap \gM_\tau(\bA_f) \]
onto \( K_\gM = gKg^{-1} \cap \gM(\bA_f) \).
Consequently
\[ [K_{\gM_\tau}^m:K_{\gM_\tau}] = [K_\gM^m:K_\gM]. \]

The definition of complexity now tells us that \( \Delta(\tau(s)) = \Delta(s) \).
\end{proof}

\bibliographystyle{amsalpha}
\bibliography{excm}

\vspace{2em}
\hrule height 0.3pt width 10em
\vspace{1em}

\noindent Mathematics Institute, University of Warwick, Coventry CV4 7AL
England, U.K. 

\smallskip
\noindent {\tt martin.orr@warwick.ac.uk}

\end{document}